\def \R{\mathbb{R}}
\def \E{\mathbb{E}}
\def \P{\mathbb{P}}
\def\cP{\mathcal P}
\def\cK{\mathcal K}
\theoremstyle{plain} 
\newtheorem{thm}{Theorem}[section] 
\newtheorem{cor}[thm]{Corollary} 
\newtheorem{lem}[thm]{Lemma} 
\newtheorem{prop}[thm]{Proposition} 
\newtheorem{defn}[thm]{Definition}
\theoremstyle{definition} 
\newtheorem{rem}[thm]{Remark}
\numberwithin{equation}{section}
\newcommand{\be}{\begin{equation}}
	\newcommand{\ee}{\end{equation}}
\def\rife#1{(\ref{#1})}
\def\m{\noalign{\medskip}}
\def\dive{{\rm div}}
\def\de{\delta}
\def\vep{\varepsilon}
\def\elle#1{L^{#1}(\Omega)}
\newcommand{\miezz}{\frac{1}{2}}
\newcommand{\eps}{\varepsilon}
\newcommand{\into}{\ensuremath{\int_{\Omega}}}
\newcommand{\norm}[1]{\ensuremath{\left\Arrowvert #1 \right\Arrowvert}}
\newcommand{\norminf}[1]{\ensuremath{\left\Arrowvert #1 \right\Arrowvert_\infty}}
\newcommand{\weak}{\ensuremath{\rightharpoonup}}
\newcommand{\intG}{\int_{\Gamma_{\delta}^\eps}}
\newcommand{\tme}{\tilde m_\eps}
\newcommand{\bl}{\bar\lambda}
\begin{document}
	
	\title{Ergodic problems for second-order Mean Field Games with state constraints}
	
	\author{Alessio Porretta$^\dagger$}\thanks{$^\dagger$Dipartimento di Matematica, Universit\`a di Roma Tor Vergata. 
		Via della Ricerca Scientifica 1, 00133 Roma, Italy. \texttt{porretta@mat.uniroma2.it}} 
	
	\author{Michele Ricciardi$^\ddagger$}\thanks{$^\ddagger$King Abdullah University of Sciences and Technologies (KAUST), Saudi Arabia. \texttt{michele.ricciardi@kaust.edu.sa}}

	\date{\today}
	
	\keywords{Mean Field Games, Fokker-Planck Equations, State constraints, Ergodic Problem, Invariant Domains}
	\subjclass{35Q89, 35Q84, 35F21, 35B44}
	
	\maketitle
	
	\begin{abstract}
		We study an ergodic mean field game problem with state constraints. In our model the agents are affected by idiosyncratic noise and use a (singular) feedback control to prevent the Brownian motion from exiting the domain. We characterize the equilibrium as the (possibly unique) solution to a second-order MFG system, where the value function blows up at the boundary while the density of the players is smooth and flattens near the boundary as a consequence of the singularity of the drift induced by the feedback strategy of the agents.
	\end{abstract}

	\section{Introduction}

	Mean Field Game theory was introduced by J.-M. Lasry and P.-L. Lions in 2006 (\cite{LL1}, \cite{LL2}, \cite{LL-japan}) and extensively developed by  P.-L. Lions in his courses at Coll\`ege de France (2006-2012), while similar ideas related to Nash equilibria in large populations were introduced independently by P. Caines, M. Huang and R. Malham\'e \cite{HCM}.  In the last decade, the theory of Mean Field Games (hereafter, MFG) has hugely evolved in both theoretical aspects and applied models, and established as one of the key-tools for studying rational strategic interactions in multi-agent systems. We refer to \cite{CP-Cime}, \cite{CD}, for a broad presentation  of the topic from, respectively, the PDE and the probabilistic viewpoint. 
	
	In the typical framework of a mean-field game model, the agents are represented as dynamical states, which are controlled to maximize the individual utility function. The interaction occurs because the running costs  and the final pay-off of the single agent may depend on the collective behavior through the distribution law of the dynamical states. In a mean-field macroscopic approach, the equilibria are known to solve a system of PDEs (so-called MFG system) where the usual Hamilton-Jacobi-Bellman equation of the value function of the single agent is coupled with the Fokker-Planck (continuity) equation describing the density evolution of the population.
	
	It is very natural, in several applications, that the dynamical state of the representative agent needs to be confined in a given domain $\Omega$ (typically, a bounded subset of $\R^n$). This forces the players to restrict to a class of  controls that confine the dynamics in $\Omega$ (or at least in $\overline \Omega$). This is what is called a {\it state constraint control problem}, from the agents' viewpoint. It is well-known that the constrained optimization readily leads to formation of singularities for the value function as well as for the optimal controls, and the mathematical analysis of MFG systems in this setting becomes quite difficult. 
	This is the reason why there are only  few contributions  to the analysis of MFG with state constraints existing in the literature so far.
	
	In the case of deterministic dynamics, the analysis of MFG with state constraints has been exploited in \cite{CaCa}, \cite{CaCaCa}; the authors were able to use new refined semi-concavity results for the value function of deterministic control problems in order to develop a satisfactory PDE approach to the MFG system, which is a first-order system in that case. By contrast, it seems that the state constraint problem has not been explored so far for MFG of second-order, say with an underlying stochastic dynamics for the agents.
	The purpose of this note is to analyze a model problem of this kind, by considering the case that the individual dynamics are affected by an additive idiosyncratic noise (in the form of a standard Brownian motion). 
	
	To be more precise, we consider the case that the generic agent is represented by the simple SDE
	\begin{equation}\label{dyn}
		\begin{cases}
			dX_t=a(X_t)\,dt+\sqrt 2\, dB_t\,,\\
			X_0=x\in\Omega\,,
		\end{cases}
	\end{equation}
	where ${(B_t)}_t$ is a standard $n$-dimensional Brownian motion and the feedback control policy $a(\cdot)$ is used by the player to force $X_t$ to remain in $\Omega$ (almost surely). This is of course a singular optimal control problem, because the feedback control $a(X_t)$ has to blow up at the boundary to prevent the Brownian motion from exiting the domain. This kind of stochastic control problems were studied in a pioneering paper by J.-M. Lasry and P.-L. Lions \cite{LL-SC}, who were able to characterize the value function in terms of second-order equations with singular boundary conditions. In this paper, we are going to consider the associated MFG problem, namely the case where the cost functional of the agents depends itself (at equilibrium)  on the distribution law of the process. Because of the technical difficulties of state constraint problems, we consider here the stationary ergodic setting, which means that the optimization of the agents can rely on the stationary invariant measure of the underlying controlled dynamics.
	
	Let us now describe more precisely the mean field game problem that we consider. Let $m=m(x)$ be a probability density  (which can be assumed to be continuous, so far) in $\Omega$, and take a cost function $F= F(x,m)$ which can be assumed to be continuous in $\Omega\times \cP(\Omega)$ (the set of probability measures, endowed with the Wasserstein distance). For a given (exogenous) measure $m$, the optimal value of the individual agents (in the long run optimization)  is 
	$$
	\lambda:=  
	\lim\limits_{T\to \infty} \,\, \inf_{a\in {\mathcal A}}\E\frac1T \left[\int_0^{T} \left(F(X_t,m(X_t))+C_q|\alpha(X_t)|^{q}\right)\right]\,,
	$$
	where $X_t$ is given by \rife{dyn}, and $\mathcal{A}$ is a suitable set of feedback controls that constrain $X_t$ into $\Omega$; namely,
	$$
	\mathcal{A}:=\Big\{\alpha\in\mathcal{C}(\Omega;\R^n)\,\big\lvert\, \P\big(X_t\in\Omega\,\,\,\forall\, t>0\big)=1\Big\}\,.
	$$
	The running cost depending on the control is assumed here to be of power-type, and in the following we assume that $q\geq 2$. The   constant $C_q$ is just a normalization constant defined as $C_q=\frac1{(q-1)}\left(\frac q{q-1}\right)^{-q}$. Hereafter, we set $p= \frac q{q-1}$ the conjugate exponent of $q$, so that we are assuming
	$$
	1<p\leq 2\,.
	$$
	The value function of the ergodic control problem  satisfies
	$$
	u(x):= \inf_{a\in {\mathcal{A}}} \E  \left[\int_0^{\theta_a} \left(F(X_t,m(X_t))+C_q|\alpha(X_t)|^{q}\right)+ u(X_{\theta_a}) -\theta_a \, \lambda \right]
	$$
	where $\theta_a$ denotes any stopping time such that $\theta_a\leq T$.  For given fixed $F(x,m(x))$, it follows from \cite[Thm VII.3]{LL-SC} that $u$ is the unique solution (up to an addition of a constant) of  the elliptic problem
	\begin{equation}\label{LL-ergodic}
		\begin{cases}
			-\Delta u +|\nabla u |^p+\lambda  =F(x,m) & \hbox{in $\Omega$}\,, \\
			\m
			\lim\limits_{x\to\partial\Omega} u(x)=+\infty & 
		\end{cases}
	\end{equation}
	and  the (unique) optimal control is given in feedback form through
	\begin{equation}\label{eq:feedback}
		\alpha(x)=-p|\nabla u(x) |^{p-2}\nabla u(x) \,.
	\end{equation}
	Up to now, the measure $m$ is fixed and represents a rational anticipation made by the agents of the possible distribution law of the whole population. A Nash equilibrium (in the mean-field game approach) occurs if $m$ coincides with the stationary invariant measure (uniquely) associated with the optimal trajectory. In terms of PDEs, this means that 
	the equilibrium (a triple $(\bar \lambda, u,m)$) comes out from the solution of the following system 
	\begin{equation}\label{mfg}
		\begin{cases}
			-\Delta u + |\nabla u|^p+\bar\lambda=F(x,m)\,,\qquad x\in\Omega\,,\vspace{0.1cm}\\
			-\Delta m -p\,\mathrm{div}(m|\nabla u|^{p-2}\nabla u)=0\,,\qquad x\in\Omega\,,\vspace{0.1cm}\\
			\mathlarger{\into} m(x)\,dx=1\,,\qquad\lim\limits_{x\to\partial\Omega}u(x)=+\infty\,,
	\end{cases}\end{equation}
	In this note, we prove that the MFG system \rife{mfg} has a unique solution, and that  $u,m$ are smooth  inside $\Omega$ and $m$ is actually $C^1$ up to the boundary of $\Omega$. This is the main result of the paper.
	
	\begin{thm}\label{main}
		Let $\Omega$ be  a bounded domain in $\R^n$ with $\mathcal{C}^2$ boundary. Assume that $1<p\le 2$ and  that $F$ satisfies \rife{Fxm}.
		Then the problem \eqref{mfg} admits a solution $(\bl,u,m)$, with $u\in W^{2,r}_{loc}(\Omega)$ for all $r<+\infty$ and $m\in L^\infty(\Omega)\cap H^1(\Omega)$.
		In addition, we have:
		\vskip0.6em
		(i) $m\in C^{1, \alpha}(\overline \Omega)$ for some $\alpha>0$, and there exist constants $C_1,C_2$ such that 
		$$
		C_1\, d(x)^{p'}\leq m(x) \leq C_2 \, d(x)^{p'}
		$$
		for every $x\in \Omega$, where $d(x)=$dist$(x,\partial \Omega)$.
		\vskip0.4em
		(ii) $u\in L^1(dm)$ and the optimal feedback control satisfies 
		$$
		\alpha(x)= -p|\nabla u(x) |^{p-2}\nabla u(x)= -\frac{p'}{d(x)}\nu(x)+ O(1)\qquad \hbox{as $d(x)\to 0$.}
		$$
		
		In addition, if $F$ satisfies \rife{LL-mon},  $(\bl,u,m)$ is unique (up to an addition of a constant to $u$).
	\end{thm}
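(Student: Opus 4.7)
The natural strategy is a Schauder fixed-point argument. For a given continuous probability density $m$ on $\Omega$, I would appeal to \cite[Thm VII.3]{LL-SC} to obtain a unique pair $(\lambda_m, u_m)$ solving \eqref{LL-ergodic} with $u_m$ normalized (say $\min u_m = 0$). This yields a feedback $\alpha_m = -p|\nabla u_m|^{p-2}\nabla u_m$ that blows up at $\partial\Omega$. Then I would look for the unique probability solution $m' = \Phi(m)$ of the stationary Fokker--Planck equation
$$
-\Delta m' - p\,\mathrm{div}\bigl(m'\,|\nabla u_m|^{p-2}\nabla u_m\bigr) = 0,\qquad \intq m'\,dx = 1,
$$
and identify a solution of \eqref{mfg} with a fixed point of $\Phi$. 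The existence part reduces to checking that $\Phi$ maps a suitable convex compact subset of $\cP(\Omega)$ into itself and is continuous in a topology in which the selected subset is compact.

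The crux is controlling the singular FP problem in a way that is uniform in $m$ and yields the announced boundary asymptotics. I would start from the sharp Bernstein-type gradient bound of Lasry--Lions $|\nabla u_m(x)| \sim \tfrac{1}{(p-1)\,d(x)}$ as $d(x) \to 0$, which already provides the refined statement in (ii) that $\alpha_m(x) = -\frac{p'}{d(x)}\nu(x) + O(1)$. For the FP equation, a super/subsolution argument with the ansatz $m = d^{p'}\varphi$ shows that the leading singularities of $-\Delta m$ and $-p\,\mathrm{div}(m|\nabla u|^{p-2}\nabla u)$ cancel exactly for the exponent $p' = q/(q-1)$, leaving a uniformly elliptic equation for $\varphi$ with a right-hand side that is smooth up to $\partial\Omega$. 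Combining the resulting bilateral bound $C_1 d^{p'} \le m \le C_2 d^{p'}$ with the corresponding gradient estimates and a Hopf-type argument yields $m \in C^{1,\alpha}(\overline\Omega)$ and strict positivity of $\varphi$. These estimates in turn make $\Phi$ continuous and compact on the closed convex set $\{m \in \cP(\Omega) : m \le C\,d^{p'}\}$, so Schauder applies.

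Uniqueness under \eqref{LL-mon} should then follow from the Lasry--Lions duality argument. Given two solutions $(\bl_i, u_i, m_i)$, I would multiply the difference of the two HJB equations by $m_1 - m_2$, subtract the difference of the two FP equations multiplied by $u_1 - u_2$, and integrate by parts on $\Omega$. The strict convexity of $\xi \mapsto |\xi|^p$ makes the HJB contribution nonnegative, so one ends up with
$$
\intq \bigl(F(x, m_1) - F(x, m_2)\bigr)(m_1 - m_2)\,dx \le 0,
$$
which by monotonicity forces $m_1 = m_2$; Lasry--Lions uniqueness for \eqref{LL-ergodic} then gives $\bl_1 = \bl_2$ and $u_1 = u_2$ up to an additive constant. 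The main obstacle throughout the argument is the justification of these integrations by parts in the presence of the singular drift and of $u \to +\infty$ at $\partial\Omega$. This is exactly where property (ii) is needed: the matching $m \asymp d^{p'}$ together with the rate $u \sim |\log d|$ (integrated from the Bernstein bound) makes $u \in L^1(dm)$, and more precisely allows one to control all boundary contributions by truncating with $\chi_{\{d > \eps\}}$ and letting $\eps \to 0$. Establishing this cutoff procedure rigorously is the technical core of the argument, but once it is set up all three items in the statement fall out.
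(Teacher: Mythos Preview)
Your overall architecture---Schauder fixed point for existence, Lasry--Lions duality for uniqueness, and the boundary matching $m\asymp d^{p'}$ making the integrations by parts legitimate---is exactly the paper's strategy. Two points, however, are not right as stated and hide the real technical content.

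First, the asymptotics you quote are only the case $p=2$. For $1<p<2$ the Bernstein bound is $|\nabla u(x)|\sim\bigl((p-1)d(x)\bigr)^{-1/(p-1)}$, not $\frac{1}{(p-1)d(x)}$, and $u$ blows up like $d^{-(2-p)/(p-1)}$, not like $|\log d|$. The feedback $\alpha=-p|\nabla u|^{p-2}\nabla u$ does rescale to $-\frac{p'}{d}\nu+O(1)$, so item (ii) survives, but your derivation of $u\in L^1(dm)$ from ``$u\sim|\log d|$'' is wrong in general; the correct product is $u\,m\asymp d^{-(2-p)/(p-1)}\cdot d^{p'}=d^2$, which is what makes the argument work.

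Second, and more importantly, the substitution $m=d^{p'}\varphi$ does \emph{not} leave a uniformly elliptic equation for $\varphi$ with smooth data. The leading $d^{p'-2}$ singularities in $-\Delta m$ and $-\dive(m b)$ do cancel, but the first-order terms that remain still carry a drift of size $d^{-1}$ in the normal direction, so the equation for $\varphi$ is just as degenerate at $\partial\Omega$ as the original one. In particular you have no boundary condition for $\varphi$, no obvious notion of weak solution, and no uniqueness---and without uniqueness your map $\Phi$ is not even well defined. This is precisely where the paper invests most of its effort: it sets up a duality definition of weak solution for the Fokker--Planck equation via test functions $\phi\in L^\infty$ solving $-\Delta\phi+b\cdot\nabla\phi\in C_b$, develops the well-posedness and Lipschitz theory for this singular linearized HJB (maximum principle, ergodic constant, gradient bounds via Bernstein with weight $e^{\theta(d)}$), constructs $m$ as the $L^1$ limit of Neumann approximations on $\Omega_\eps=\{d>\eps\}$, and obtains the two-sided bound $C_1d^{p'}\le m\le C_2 d^{p'}$ by comparing $m_\eps$ with the barriers $d^{\gamma}\pm d^{\gamma+\sigma}$ together with the strong maximum principle and Hopf's lemma on $\Omega_\eps$. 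The continuity of $\Phi$ in the fixed-point argument and the justification of the duality pairing in the uniqueness proof both rely on this linearized theory, not on a direct change of unknown.
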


	The proof of Theorem \ref{main} relies on two main ingredients. One is the analysis of the singular stochastic control problem \rife{LL-ergodic}, which was developed in \cite{LL-SC}, and later in \cite{LP}, \cite{JuanSebastianVeron}. In particular, we borrow from those latter papers the refined asymptotics near the boundary for  $\nabla u(x)$, hence for the optimal feedback control \rife{eq:feedback}. The second ingredient is the analysis of the Fokker-Planck equation with singular drift. This is possible once we know the asymptotic behavior of $\nabla u(x)$ near the boundary, which allows us to use part of the theory developed in our former paper \cite{MFGinv} for the case of dynamics with invariance conditions. The case of invariance conditions means that  the process remains in $\Omega$  {\it for all controls available}; this is of course a much better situation for the controller, and in fact it results into  a more regular value function (proved to be Lipschitz and globally semi-concave, see \cite{MFGinv}). What we observe in the present article is that, in the second-order state constraint problem, the control and the value function become highly singular, but they ensure, after all, the invariance condition for the optimal dynamics. Therefore,  from the viewpoint of the population dynamics, this results into a regularization of the density function at equilibrium. The stronger the control will push, the flatter the density will be near the boundary, which is the spirit of estimates (i)--(ii) in Theorem \ref{main}. This is the main point that we address in our analysis and it seems peculiar of the diffusive character of the dynamics; to our knowledge, there is no similar behavior observed in the deterministic case. 
	
	Of course, the precise character of such boundary estimates is induced by the simple homogeneous form of the diffusion and of the cost functional, but we believe this could serve as a prototype for a more general analysis of second-order state constraint problems.  In addition, let us mention that the study of the ergodic problem \rife{mfg} will be certainly relevant for the analysis of the long-time behavior of exit-time problems, as is well-understood already for a single HJ equation \cite{BPT}.

	\section{Notations and assumptions}

	Throughout the paper, $\Omega$ denotes a  bounded open subset of $\R^n$, $n\geq 1$, and $\partial \Omega$ denotes its boundary. We assume that $\Omega$ is of class $C^2$ and we currently use 
	the so-called \emph{oriented distance function}, defined in this way:
	\begin{equation*}
		d_\Omega(x)=\left\{\begin{array}{rl}
			\mathrm{dist}(x,\partial\Omega)  & \quad\mbox{if }x\in\Omega\,,\\
			-\mathrm{dist}(x,\partial\Omega) & \quad\mbox{if }x\notin\Omega\,.
		\end{array}\right.
	\end{equation*}
	We will simply write $d(\cdot)$ when we will refer to the set $\Omega$. In other cases, we will specify the involved set. It is well-known (see e.g. \cite{GT}) that $d(x)$ is a $C^2$ function in a suitable neighborhood of $\partial\Omega$. For $\de>0$,  we define   the open sets 
	$$
	\Omega_\delta:=\{x\in\Omega\mbox{ s.t. }d(x)>\delta\}\,;\,\qquad \Gamma_\de:= \Omega\setminus \overline{\Omega_\de}=\{x\in \Omega\,:\,d(x)<\de\}\,.
	$$
	The outward unit normal vector on $\partial\Omega$ is denoted by $\nu(x)$. Since $\Omega$ is smooth, we also have $\nu(x)=-\nabla d(x)$ for $x\in \Gamma_\de$, provided $\de$ is sufficiently small. The tangent vector on $\partial \Omega$ is denoted by $\tau(x)$. The notation $v\otimes w$ is used for 
	the tensor product of vectors: if $v\in\R^n$ and $w\in\R^m$, then $v\otimes w$ is the $n\times m$ symmetric matrix such that $(v\otimes w)_{ij}=v_i w_j$.
	
	For $x_0\in\R^n$ and $r>0$, the ball centered in $x_0$ of radius $r$ will be indicated by $B(x_0;r)$. If $x_0$ is the origin $O$, we simply write $B_r$.
	
	As it is usual, we denote by $W^{1,\infty}(\Omega)$ the space of bounded, Lipschitz functions in $\Omega$, and by $\cP(\Omega)$ the space of probability measures in $\Omega$, endowed with the standard Kantorovich-Rubinstein distance. We use the  standard notation $L^p(\Omega)$, $H^1(\Omega)$ for Lebesgue spaces and, respectively, for the  Sobolev space where $u,\nabla u$ are $L^2$-summable. The notation $W^{2,p}(\Omega)$ is used for the Sobolev space where second derivatives are in $L^p$, and $C^{k,\alpha}(\Omega)$ denotes the space where $j$-derivatives are $\alpha$-H\"older continuous with $j\leq k$. We shortly write $C^\alpha(\Omega)$ for $C^{0,\alpha}(\Omega)$.
	Finally, we write  $ f(x)= O(g(x))$ (as $x\to x_0$, or $x\to \partial\Omega$) to say that, for some constant $C>0$, we have $|f(x)|\leq C g(x)$ (for $x$ near $x_0$, or $x$ near $\partial\Omega$); as it is standard, we write instead $ f(x)= o(g(x))$ to mean that $\frac{f(x)}{g(x)}\to 0$ and $ f(x) \sim g(x)$ whenever  $\frac{f(x)}{g(x)}\to 1$.
	\vskip1em
	The goal of the paper is to study the MFG system
	$$
	\begin{cases}
		-\Delta u + |\nabla u|^p+\bar\lambda=F(x,m)\,,\qquad x\in\Omega\,,\vspace{0.1cm}\\
		-\Delta m -p\,\mathrm{div}(m|\nabla u|^{p-2}\nabla u)=0\,,\qquad x\in\Omega\,,\vspace{0.1cm}\\
		\mathlarger{\into} m(x)\,dx=1\,,\qquad\lim\limits_{x\to\partial\Omega}u(x)=+\infty\,,
	\end{cases}
	$$
	where $1<p\leq 2$.  As for the coupling cost function $F(x,m)$, we will assume that $F:\Omega \times \cP(\Omega) \to \R$ is a continuous function such that  $x\mapsto F(x,m)$ is Lipschitz continuous, for every $m\in \cP(\Omega)$ and satisfies
	\be\label{Fxm}
	\exists \, \, C_F>0\,:\, \quad \|F(\cdot, m)\|_{W^{1,\infty}(\Omega)} \leq C_F \quad \forall m\in \cP(\Omega)\,.
	\ee
	While assumption \rife{Fxm} will be used for the existence of a solution, the uniqueness will be proved under the additional monotonicity condition:
	\be\label{LL-mon}
	\into \left( F(x,m)-F(x,\tilde m)\right)\, d(m-\tilde m)(x) \geq 0 \qquad \forall m\,,\,\tilde m \in \cP(\Omega)\,.
	\ee

	\section{The Hamilton-Jacobi-Bellman equation}
	
	In this Section, we consider the single stochastic control problem of the agents, and we start by recalling what is known about the ergodic problem
	\begin{equation}\label{eq:u}
		\begin{cases}
			-\Delta u+|\nabla u|^p+\bl=f(x)\,,\\
			\lim\limits_{x\to\partial\Omega} u(x)=+\infty\,.
		\end{cases}
	\end{equation}
	for a given source $f(x)$. 
	
	Problem \rife{eq:u} was obtained in \cite{LL-SC} as the limit of  discounted control problems,
	and the existence of  a unique solution $(\bar \lambda , u)$ was proved (up to an addition of a constant to $u$) assuming $f\in \elle\infty$ (or only $f\in L^\infty_{loc}(\Omega)$ such that $f=O(d(x)^{-\frac p{p-1}})$ as $d(x)\to 0$).  In particular, $u\in W^{2,r}_{loc}(\Omega)$ for every $r<\infty$ and satisfies the following asymptotic estimates:
	\begin{equation}\label{eq:stimeu}
		\begin{split}
			&1<p<2 \quad\implies  u(x)\sim \beta_p d(x)^{2-p'}\quad\mbox{ for }x\to\partial\Omega,\mbox{ with } \beta_p=\frac{(p-1)^{1-p'}}{p'-2}\,;\\
			&p=2 \qquad\hspace{0.35cm}\implies u(x)\sim -\log d(x)\quad\hspace{0.15cm} \mbox{ for }x\to\partial\Omega\,; 
		\end{split}
	\end{equation}
	Recall that $2-p'=\frac{p-2}{p-1}<0$ if $1<p<2$, hence $u$ blows up for $x\to\partial\Omega$, as required in the boundary condition. 
	For simplicity, we define $c_p=(p-1)^{1-p'}$ for $1<p\le2$ and the function 
	\begin{equation}\label{def:phi}
		\phi_p(x)=\left\{
		\begin{array}{rll}
			\frac1{p'-2}x^{2-p'} && \mbox{for }1<p<2\,,\\
			-\log x && \mbox{for }p=2\,.
		\end{array}
		\right.
	\end{equation}
	Then 
	the estimate \eqref{eq:stimeu} can be shortened to
	$$
	u(x)\sim c_p\,\phi_p(d(x))\qquad\mbox{for }x\to\partial\Omega\,,\quad\mbox{for }1<p\le2\,.
	$$
	In fact, if one observes that, after \rife{eq:stimeu}, it holds  $u(x)=o(d(x)^{-\frac1{p-1}})$, then it is possible to have a second-order expansion of $u$ for the solution of the ergodic problem. This is a  consequence of   \cite[Thm II.3]{LL-SC} and yields the following result.
	
	\begin{prop} Let $1<p\le2$ and $f\in L^\infty_{loc}(\Omega)$ such that $f(x)= o(d(x)^{-\frac1{p-1}})$. Then the unique solution $u$ of 
		\rife{eq:u} satisfies
		\begin{equation}\label{eq:taylorDu}
			\displaystyle u(x)=c_p\, \phi_p(d(x))+O\Big(\gamma_p\big(d(x)\big)\Big)\,,
		\end{equation}
		where $\phi_p$ is given by \eqref{def:phi} and $\gamma_p(\delta)$ is defined as follows:
		\begin{equation*}
			\gamma_p(\delta)=\left\{
			\begin{array}{llll}
				\displaystyle\delta^{3-p'} &\quad \mbox{if }&1<p<\frac32 &\quad(\mbox{i.e. }p'>3)\,,\\
				\displaystyle|\log\delta| &\quad \mbox{if }&p=\frac32 &\quad(\mbox{i.e. }p'=3)\,,\\
				\displaystyle 1 &\quad \mbox{if }&\frac32<p\le2 &\quad(\mbox{i.e. }2\le p'<3)\,.\\
			\end{array}
			\right.
		\end{equation*}
		\qed
	\end{prop}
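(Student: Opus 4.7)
The plan is to prove the expansion by constructing matching sub- and super-solutions of \rife{eq:u} near $\partial\Omega$ with leading profile $c_p\phi_p(d)$ and a correction of size $\gamma_p(d)$, then concluding via the comparison principle for the quasilinear operator $-\Delta+|\nabla\cdot|^p$. First I would work in a tubular neighborhood $\Gamma_\delta$ of $\partial\Omega$ where $d\in C^2$ and $|\nabla d|=1$. Setting $v_0(x):=c_p\phi_p(d(x))$, a direct computation using $\phi_p'(d)=-d^{1-p'}$, $\phi_p''(d)=(p'-1)d^{-p'}$ and the identity $c_p^p=c_p(p'-1)$ (which is exactly what the definition $c_p=(p-1)^{1-p'}$ encodes) shows that the top-order $d^{-p'}$ terms in $|\nabla v_0|^p$ and $-\Delta v_0$ cancel, leaving
\[
-\Delta v_0+|\nabla v_0|^p+\bar\lambda \,=\, c_p\,d(x)^{1-p'}\Delta d(x)+\bar\lambda.
\]
Since by assumption $f(x)=o(d^{1-p'})$, the residual of $v_0$ with respect to \rife{eq:u} is therefore of order $d^{1-p'}$ near $\partial\Omega$.

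Next I would look for a correction $\psi$ such that $v_\pm:=v_0\pm\psi$ becomes a super-, respectively sub-, solution in $\Gamma_\delta$. Expanding $|\nabla(v_0+\psi)|^p$ around $\nabla v_0$ and using $p|\nabla v_0|^{p-2}\nabla v_0=-p'd^{-1}\nabla d$, the relevant linearization near $\partial\Omega$ is $\mathcal{L}\psi:=-\Delta\psi-p'd^{-1}\nabla d\cdot\nabla\psi$. In the regime $1<p<3/2$, so that $3-p'<0$, the ansatz $\psi=Kd^{3-p'}$ yields $\mathcal{L}\psi=-2K(3-p')d^{1-p'}+O(d^{2-p'})$, and choosing $K$ of the appropriate sign with $|K|$ large enough (depending on $\|\Delta d\|_{L^\infty(\Gamma_\delta)}$, $\bar\lambda$ and the modulus of $f\cdot d^{p'-1}$) absorbs the $d^{1-p'}$ residual of the previous step with the correct inequality. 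In the borderline case $p=3/2$ the previous ansatz degenerates ($3-p'=0$), and I would replace it by $\psi=K|\log d|$, whose leading contribution under $\mathcal{L}$ is of order $d^{-2}$ and balances $c_pd^{-2}\Delta d$ after suitable choice of $K$. For $3/2<p\le 2$, the residual $d^{1-p'}$ is already integrable against the singular Bessel-type drift $-p'd^{-1}\nu$, so a bounded constant $\psi=K$ suffices to absorb it, producing only an $O(1)$ error, in agreement with $\gamma_p\equiv 1$.

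In the interior $\Omega_\delta$, $u$ is uniformly bounded by the interior $W^{2,r}$ estimates for \rife{eq:u}, and so is $v_0$; by adjusting an additive constant in $v_\pm$ one can arrange $v_-\le u\le v_+$ on $\partial\Omega_\delta$. Since $u$ and $v_\pm$ blow up at $\partial\Omega$ with the same leading profile $c_p\phi_p(d)$, with $v_-$ singular slightly slower and $v_+$ singular slightly faster, the comparison principle for \rife{eq:u} with blow-up boundary data (established in \cite{LL-SC}) yields $v_-\le u\le v_+$ throughout $\Gamma_\delta$, which is exactly the claimed expansion. The main obstacle I foresee is the verification in the second step that the nonlinear remainder $|\nabla v_0+\nabla\psi|^p-|\nabla v_0|^p-p|\nabla v_0|^{p-2}\nabla v_0\cdot\nabla\psi$ is of strictly lower order than $d^{1-p'}$: this is transparent for $\psi=Kd^{3-p'}$, where $|\nabla\psi|^2|\nabla v_0|^{p-2}$ scales like $d^{3-p'}$, but it is delicate in the borderline case $p=3/2$, where all candidate correction terms are of comparable order and a careful balance between the coefficients is needed.
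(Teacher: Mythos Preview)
The paper does not prove this proposition: it records that, once \eqref{eq:stimeu} gives $u(x)=o(d(x)^{1-p'})$, the expansion \eqref{eq:taylorDu} follows from \cite[Thm~II.3]{LL-SC}, and closes with a \qed. So there is no in-paper argument to compare against; your barrier-plus-comparison outline is a natural route to the result and is in the spirit of \cite{LL-SC}.

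There is, however, a genuine error in your handling of the range $\tfrac32<p\le 2$. The sentence ``the residual $d^{1-p'}$ is already integrable against the drift $-p'd^{-1}\nu$, so a bounded constant $\psi=K$ suffices'' is an ODE heuristic, not a pointwise supersolution statement: since $\mathcal{L}K=0$, the function $v_0+K$ is \emph{not} a supersolution in $\Gamma_\delta$, and the $O(d^{1-p'})$ residual is left unbalanced. The fix is simply to retain the ansatz $\psi=K d^{3-p'}$ that you already use for $p<\tfrac32$: the identity $\mathcal{L}\psi=-2K(3-p')d^{1-p'}+O(d^{2-p'})$ holds for every $p\neq\tfrac32$, and a suitable sign and size of $K$ absorbs the residual. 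When $p>\tfrac32$ one has $3-p'>0$, so $\psi$ is bounded and the two-sided estimate becomes $|u-v_0|\le K d^{3-p'}+C=O(1)=O(\gamma_p)$, as required.

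A second, smaller point: your justification of the comparison at $\partial\Omega$ --- that $v_-$ blows up ``slightly slower'' and $v_+$ ``slightly faster'' than $u$ --- is not correct as stated, since $v_\pm$ and $u$ share the \emph{same} leading singularity $c_p\phi_p(d)$, and \eqref{eq:stimeu} alone does not order them near $\partial\Omega$. What actually makes the comparison go through (and what \cite{LL-SC} provides) is that any subsolution with growth $o(d^{1-p'})$ lies, up to an additive constant, below any supersolution blowing up at $\partial\Omega$; the resulting $O(1)$ shift is harmless since $1=O(\gamma_p(d))$ in every regime.
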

	Assuming more regularity on the right-hand side $f$, similar asymptotics were obtained for $\nabla u$ (hence for the optimal feedback control) in \cite{LP}, \cite{JuanSebastianVeron}.  In particular, if $f\in W^{1,\infty}(\Omega)$ 
	then we have, for every $1<p\le2\,$:
	\begin{equation}\label{eq:stimeDu}
		\nabla u(x)=\big((p-1)d(x)\big)^{1-p'}\big[\nu(x)+O(d(x))\big]\,,\qquad \hbox{as $d(x)\to 0$,}
	\end{equation}
	and in particular there exists $\psi\in L^\infty(\Omega)$ such that 
	\be\label{stime-feed}
	\alpha(x)=-\frac{p'}{d(x)}\nu(x)+\psi(x)
	\ee
	where $\alpha(\cdot)$ is the optimal feedback law \rife{eq:feedback}.
	
	Notice that, in all cases, the gradient estimate can be formally deduced by differentiation of the estimate on $u$. In fact, since $\phi_p'(x)=-x^{1-p'}$,  we have shortly
	$\nabla u(x)\sim c_p\,\phi_p'(d(x))\nabla d(x)$ as $d(x)\to 0$.
	%

	%
	
	
	\subsection{Further regularity for $u$} Now we wish to improve the regularity of $u$, by showing  the asymptotic behavior of the Hessian $D^2u$ near the boundary $\partial\Omega$.  We already have the estimates \eqref{eq:taylorDu} and \eqref{eq:stimeDu} for $u$ and $\nabla u$. 
	Another equivalent formulation of \eqref{eq:taylorDu} is the following one:
	\begin{equation}\label{eq:taylorDu2}
		\displaystyle u(x)=c_p\, \phi_p(d(x))\Big[1+O\Big(\omega_p\big(d(x)\big)\Big)\Big]\,,
	\end{equation}
	where $\omega_p(\delta)=\phi_p(\delta)^{-1}\gamma_p(\delta)$, i.e.:
	\begin{equation}\label{def:omega}
		\omega_p(\delta)=\left\{
		\begin{array}{llll}
			\displaystyle\delta &\quad \mbox{if }&1<p<\frac32 &\quad(\mbox{i.e. }p'>3)\,,\\
			\displaystyle \delta|\log\delta| &\quad \mbox{if }&p=\frac32 &\quad(\mbox{i.e. }p'=3)\,,\\
			\displaystyle \delta^{p'-2} &\quad \mbox{if }&\frac32<p<2 &\quad(\mbox{i.e. }2< p'<3)\,,\\
			\displaystyle |\log\delta|^{-1} &\quad \mbox{if }&p=2 &\quad(\mbox{i.e. }p'=2)\,.\\
		\end{array}
		\right.
	\end{equation}
	
	Since the function $\omega_p$ plays a role in other estimations, the formulation \eqref{eq:taylorDu2} is often used in the rest of the paper. One of them is the next result, which involves the second derivatives of $u$.
	
	\begin{prop}\label{prop:21}
		Assume that $1<p\le2$ and let $f\in\mathcal{C}^\alpha(\Omega)$, for $0<\alpha<1$, and $u$ be a solution of \eqref{eq:u}. Then we have
		\begin{equation}\label{eq:stimeD2u}
			\lim\limits_{x\to\partial\Omega}d(x)^{p'}D^2u(x)=(p-1)^{-p'}\nu(x)\otimes\nu(x)\,.
		\end{equation}
		Moreover, for $p\neq2$ we have a second-order expansion of $D^2u$:
		\begin{equation}\label{eq:taylorD2u}
			D^2u(x)=(p-1)^{-p'}d(x)^{-p'}\Big[\nu(x)\otimes\nu(x)+O\Big(\omega_p\big(d(x)\big)\Big)\Big]\,,
		\end{equation}
		where $\omega_p$ is defined in \eqref{def:omega}.
	\end{prop}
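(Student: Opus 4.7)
The plan is to combine the known asymptotics \eqref{eq:taylorDu}--\eqref{eq:stimeDu} with a boundary blow-up argument that reduces the behavior of $D^2u$ near $\partial\Omega$ to an interior elliptic estimate. Formal differentiation of $u(x)\sim c_p\phi_p(d(x))$ already suggests the answer: the leading contribution to $D^2u$ is $c_p\phi_p''(d)\,\nabla d\otimes\nabla d$, and the identity $c_p(p'-1)=(p-1)^{1-p'}(p-1)^{-1}=(p-1)^{-p'}$ together with $\nabla d=-\nu$ reproduces the coefficient in \eqref{eq:stimeD2u}; the cross term $c_p\phi_p'(d)\,D^2d$ is $O(d^{1-p'})$, of strictly lower order than $d^{-p'}$. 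The proof must turn this formal computation into a rigorous limit and, for $p\neq 2$, quantify the rate.

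For \eqref{eq:stimeD2u}, I fix $x_0\in\Omega$ with $d_0:=d(x_0)$ small and, for $1<p<2$, introduce
\begin{equation*}
v_{x_0}(y):=d_0^{p'-2}\,u(x_0+d_0 y)\,,\qquad y\in B_{1/2}(0)
\end{equation*}
(for $p=2$, one uses instead $v_{x_0}(y):=u(x_0+d_0 y)+\log d_0$, which does not affect the Hessian rescaling). A direct computation yields the intertwining
\begin{equation*}
-\Delta v_{x_0}+|\nabla v_{x_0}|^p+d_0^{p'}\bar\lambda=d_0^{p'}\,f(x_0+d_0 y)\,,\qquad D^2v_{x_0}(0)=d_0^{p'}D^2u(x_0)\,,
\end{equation*}
so that \eqref{eq:stimeD2u} is equivalent to identifying $\lim_{d_0\to 0}D^2v_{x_0}(0)$. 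The asymptotics \eqref{eq:taylorDu}--\eqref{eq:stimeDu} combined with the Taylor expansion $d(x_0+d_0 y)=d_0(1-y\cdot\nu(x_0))+O(d_0^2)$ show that $v_{x_0}$ and $\nabla v_{x_0}$ are uniformly bounded on $B_{1/2}(0)$ and that $v_{x_0}$ converges uniformly, as $x_0\to x_*\in\partial\Omega$, to the explicit profile $v_*(y)=\frac{c_p}{p'-2}(1-y\cdot\nu(x_*))^{2-p'}$ (respectively $-\log(1-y\cdot\nu(x_*))$ for $p=2$), which a one-line check reveals to be an exact solution of $-\Delta v+|\nabla v|^p=0$. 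Since $f\in C^\alpha$, interior Schauder estimates for quasilinear equations yield a uniform $C^{2,\alpha}$ bound on $v_{x_0}$, the convergence upgrades to $C^{2,\alpha'}$ on $B_{1/4}(0)$, and a short computation gives $D^2v_*(0)=(p-1)^{-p'}\nu(x_*)\otimes\nu(x_*)$, closing the first part.

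For the quantitative expansion \eqref{eq:taylorD2u} with $p\neq 2$, I set $v_*^{(x_0)}(y):=\frac{c_p}{p'-2}(1-y\cdot\nu(x_0))^{2-p'}$, the same ansatz built with the normal at $x_0$, and study $\tilde w_{x_0}:=v_{x_0}-v_*^{(x_0)}$. Since $v_*^{(x_0)}$ is an exact solution of $-\Delta v+|\nabla v|^p=0$, the difference satisfies
\begin{equation*}
-\Delta\tilde w_{x_0}+\bigl(|\nabla v_{x_0}|^p-|\nabla v_*^{(x_0)}|^p\bigr)=d_0^{p'}\bigl[f(x_0+d_0 y)-\bar\lambda\bigr]\,.
\end{equation*}
Combining \eqref{eq:taylorDu} with the defining identity $d_0^{p'-2}\gamma_p(d_0)=\omega_p(d_0)$ and the $O(d_0^2)$ Taylor error in $d$ gives the sup-norm bound $\|\tilde w_{x_0}\|_{L^\infty(B_{1/2})}=O(\omega_p(d_0))$. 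Since $|\nabla v_*^{(x_0)}|\ge c>0$ on $B_{1/2}(0)$ uniformly in $x_0$, the nonlinear difference linearizes as a first-order term in $\nabla\tilde w_{x_0}$ with uniformly elliptic $C^\alpha$ coefficients, and interior Schauder estimates applied to this perturbed linear equation yield $\|D^2\tilde w_{x_0}\|_{L^\infty(B_{1/4})}=O(\omega_p(d_0)+d_0^{p'})=O(\omega_p(d_0))$, the last equality because $d_0^{p'}=o(\omega_p(d_0))$ in every sub-case of \eqref{def:omega}. Unscaling via $D^2v_{x_0}(0)=d_0^{p'}D^2u(x_0)$ and adding $D^2v_*^{(x_0)}(0)=(p-1)^{-p'}\nu(x_0)\otimes\nu(x_0)$ produces \eqref{eq:taylorD2u}.

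The main obstacle lies in the Schauder step: for $1<p<2$ the map $\xi\mapsto|\xi|^p$ is only $C^1$ at the origin, but on $B_{1/2}(0)$ we have $|\nabla v_*^{(x_0)}|\ge c>0$ uniformly in $x_0$, so the linearization has $C^\alpha$ coefficients with bounds independent of $d_0$. A secondary point is that this non-degeneracy must persist along $\nabla v_{x_0}$ as well, which follows from the sup-norm closeness $\tilde w_{x_0}=O(\omega_p(d_0))\to 0$ together with \eqref{eq:stimeDu}; once these uniform bounds are checked carefully, the remainder is standard elliptic regularity.
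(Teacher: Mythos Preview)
Your argument is essentially the same as the paper's: both rescale $u$ near the boundary, obtain uniform $C^{2,\alpha}$ bounds by a bootstrap (bounded gradient $\Rightarrow$ $W^{2,r}$ $\Rightarrow$ $C^{1,\beta}$ gradient $\Rightarrow$ Schauder), identify the limiting profile, and for \eqref{eq:taylorD2u} subtract the profile and apply interior Schauder estimates to the linearized difference equation $-\Delta\tilde w + V\cdot\nabla\tilde w=g$, using that the convex combination of the two gradients stays away from zero so that $V\in C^\alpha$ uniformly.

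The one technical difference is the blow-up center. The paper fixes a boundary point $x\in\partial\Omega$, chooses boundary-adapted coordinates with $y_1=-\nu(x)$, and lets the rescaled domain $\frac1\delta D_\delta$ expand to the half-space $\R^n_+$; this forces a Liouville-type identification for $p=2$ (showing that $e^{-v}$ is a positive harmonic function on $\R^n_+$ dominated by $\xi_1$, hence a multiple of $\xi_1$). You instead center at an interior point $x_0$ and work on the fixed ball $B_{1/2}$, which lets you read off the limit directly from the gradient asymptotics \eqref{eq:stimeDu} without any Liouville step. This is a mild simplification. One small imprecision in your write-up: for $p=2$ you only obtain $v_{x_0}\to v_*+\mathrm{const}$ (since $\gamma_p(d_0)=1$ there), not $v_{x_0}\to v_*$; the identification of the limit really goes through $\nabla v_{x_0}\to\nabla v_*$ via \eqref{eq:stimeDu}, which is harmless for the Hessian conclusion.
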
 
	In the following, we will often write $d$ and $\omega_d$ instead of $d(x)$ and $\omega_p(d(x))$ when there is no possibility of mistakes, in particular with the notation $\phi_p(d)$ and $O(\omega_d)$.
	\begin{proof}
		\emph{Step 1.} In the first step, we prove \eqref{eq:stimeD2u}. 
		We follow the approach used in \cite[Theorem 2.3]{JuanSebastianVeron} for the asymptotics of $\nabla u$.  Let $d_0$ be sufficiently small so that $d(x)$ is smooth in $\Gamma_{d_0}$.  We fix $x\in\partial\Omega$ and we consider a new orthonormal system of coordinates $(y_1,\dots,y_n)$ centered at $x$, where $y_1=-\nu(x)$. In these coordinates, we define for $0<\de< d_0 $, $0<\sigma<\miezz$ and $O_{d_0}:=(d_0,0,\dots,0)$,
		\begin{equation}\label{eq:od}
			D_\delta=B_{\delta^{1-\sigma}}\cap  B(O_{d_0};d_0)\,.
		\end{equation}
		Since $\sigma>0$, we note that $\frac 1{\delta} D_\delta\overset{\delta\to 0^+}\longrightarrow\R^n_{+}=\{\xi\in\R^n\,\arrowvert\, \xi_1>0\}\,$. Now we make another change of variable and define the following quantities:
		\begin{equation}\label{eq:xi}
			\xi=\frac{y}{\delta}\,,\qquad v_\delta(\xi):=\left\{\begin{array}{lll}
				\displaystyle c_p^{-1}\delta^{p'-2}u(\delta\xi) &\quad \mbox{if }&p\neq 2\,,\vspace{0.2cm}\\
				\displaystyle u(\delta\xi)+\log\delta &\quad \mbox{if }&p=2\,,\\
			\end{array}\right.
		\end{equation}
		where $c_p=(p-1)^{1-p'}$.  Observe that, since $\sigma<\miezz$, $d(\de\xi)\simeq \de\,\xi_1$ for $\de$ small; therefore, 
		thanks to  \eqref{eq:taylorDu}, we know that $v_\delta$ is locally bounded for $\xi_1>0$, uniformly with respect to $\delta$. Moreover, $v_\delta$ satisfies the following Hamilton-Jacobi equation:
		\begin{equation}\label{eq:hjbvd}
			-\Delta v_\delta(\xi) + c_p^{p-1}\,|\nabla v_\delta(\xi)|^p+c_p^{-1}\bl\,\delta^{p'}=c_p^{-1}\,\delta^{p'}\,f(\delta\xi)\,,\qquad\xi\in\frac 1\delta D_\delta\,,
		\end{equation}
		where we notice, due to \rife{eq:stimeDu}, that $|\nabla v_\de|$ is also locally bounded, for $\xi_1>0$. 
		By well-known elliptic regularity  (see \cite{GT}),  we infer that $v_\delta$ is locally bounded in $\mathcal{C}^{2+\alpha}$, uniformly in $\delta$. Then, with a bootstrap  argument, we easily obtain the relative compactness of $\{v_\delta\}_\delta$ in $\mathcal{C}^2_{loc}$. Then there exists a subsequence $\{\delta_k\}_k$ and a function $v$ such that $v_{\delta_k}\to v$. Passing to the limit in \eqref{eq:hjbvd} we get
		\begin{equation}\label{eq:v}
			-\Delta v(\xi) + c_p^{p-1}\,|\nabla v(\xi)|^p=0\,.
		\end{equation}
		To find an explicit representation of $v$, we consider two cases. For $p\neq2$ we come back to \eqref{eq:stimeu}. Using our change of variables we find
		$$
		\lim\limits_{x\to\partial\Omega} (p'-2)d(x)^{p'-2}u(x)=c_p\implies\lim\limits_{y_1\to0} y_1^{p'-2}u(y)=c_p(p'-2)^{-1}\,.
		$$
		Since $y=\xi\delta$, we have $y_1 =\delta\xi_1$, so  $y_1\to 0 \iff \delta\to 0$. Hence  the above rewrites as 
		$$
		\lim\limits_{\delta\to0}\, (\delta\xi_1)^{p'-2}u(\delta\xi)=c_p(p'-2)^{-1}
		$$
		which yields
		$$
		\lim\limits_{\delta\to0} v_\delta(\xi)=\frac1{p'-2}\,\xi_1^{2-p'}\,.
		$$
		For $p=2$ we know from \eqref{eq:taylorDu} that  $u(\cdot)+\log d(\cdot)\in L^\infty(\Omega)$; therefore, from \rife{eq:v} we deduce  that the function $w=e^{-v}$ is positive and harmonic in the half-space $\R^n_{+}$, with $w\leq c\, \xi_1$ for some $c>0$. This implies that there exists $\lambda >0$ such that $w= \lambda \xi_1$, which means that 
		$$
		\lim\limits_{\delta\to0} v_\delta(\xi)=-\log\xi_1 + \lambda
		$$
		We can summarize the two limits as
		$$
		\lim\limits_{\delta\to0}v_\delta(\xi)=\phi_p(\xi_1)\qquad\forall\,1<p\le2\,,
		$$
		and this limit is true for the whole sequence $\{v_\delta\}_\delta$ and for $\xi\in\R^n_+$.  Since the convergence holds in $\mathcal{C}^2_{loc}$, we also have
		$$
		\lim\limits_{\delta\to0} \nabla^2v_\delta(\xi)=\nabla^2\big(\phi_p(\xi_1)\big)\,.
		$$
		We compute the two Hessian matrices. For the right-hand side, we easily have
		$$
		\nabla^2\big(\phi_p(\xi_1)\big)=(p'-1)\,\xi_1^{-p'}\begin{pmatrix}
			1 & 0 &\dots & 0\\
			0 & 0 &\dots & 0\\
			\dots&\dots&\dots&\dots\\
			0 &0 &\dots &0
		\end{pmatrix}\,.
		$$
		As regards the left-hand side, we have
		$$
		\nabla^2 v_\delta(\xi)=c_p^{-1}\, \delta^{p'}\,\nabla^2 u(\delta\xi)\,.
		$$
		This implies
		$$
		\lim\limits_{y_1\to0}\partial^2_{y_iy_j} u(y)=0\qquad\mbox{for }(i,j)\neq (1,1)\,,
		$$
		and, choosing $\xi=(1,0,\dots,0)$, we get
		$$
		\lim\limits_{\delta\to 0}\,\delta^{p'}\,\partial^2_{y_1y_1} u(\delta,0,\dots,0)=c_p(p'-1)=(p-1)^{-p'}\,.
		$$ 
		Since $\partial^2_{y_1y_1}u(\delta,0,\dots,0)=\partial^2_{\nu(x)\nu(x)}u(x-\delta\nu(x))$ we have proved \eqref{eq:stimeD2u}.\\
		
		\emph{Step 2.} Now we  improve the equation \eqref{eq:stimeD2u} with a ``second-order expansion" near $\partial\Omega$, i.e. proving \eqref{eq:taylorD2u}.
		
		Subtracting \eqref{eq:v} from \eqref{eq:hjbvd}, we obtain that the function $v_\delta-v$ satisfies
		\begin{equation}\label{eq:vd-v}
			-\Delta(v_\delta-v)+V_\delta(\xi)\cdot\nabla(v_\delta-v)=g_\delta(\xi)\,,\qquad \xi\in\frac{1}{\delta}D_\delta\,,
		\end{equation}
		where
		\begin{equation*}
			\begin{split}
				&V_\delta(\xi)=p\int_0^1\big|t\nabla v_\delta(\xi)+(1-t)\nabla v(\xi)\big|^{p-2}\big(t\nabla v_\delta(\xi)+(1-t)\nabla v(\xi)\big)\,dt\,,\\
				&g_\delta(\xi)=-c_p^{-1}\bl\,\delta^{p'}+c_p^{-1}\delta^{p'}f(\delta\xi)\,.
			\end{split}
		\end{equation*}
		Since $V_\delta$ is locally bounded in $C^\alpha\left(\frac1\delta D_\delta\right)$ uniformly in $\delta$, we can use the local estimates of  \cite[Thm 6.2]{GT}. Called $e_1=(1,0,\dots,0)$, we have 
		\begin{equation*}
			\begin{split}
				\big|D^2v_\delta(e_1)-D^2v(e_1) \big|\le C\big(\norm{g_\delta}_{C^\alpha}+\norminf{v_\delta-v}\big)\,.
			\end{split}
		\end{equation*}
		As regards $g_\delta$, we have
		$$
		\norm{g_\delta}_{C^\alpha}\le C\delta^{p'}\norm{f}_{C^\alpha}+C\delta^{p'}\le C\delta^2\,.
		$$
		Now we analyze the term $\norminf{v_\delta-v}$. We restrict here to $p\ne 2$. We rewrite   as
		\begin{equation}\label{eq:vdmv}
			\begin{split}
				v_\delta(\xi)-v(\xi)=c_p^{-1}\delta^{p'-2}\Big[u\big(\delta\xi_1,\dots,\delta \xi_n\big)-c_p\phi_p\big(\delta\xi_1\big)\Big]\,.
			\end{split}
		\end{equation}
		Using the estimates in \eqref{eq:taylorDu2}, we know that
		$$
		u(\delta\xi_1,\dots,\delta \xi_n)=c_p\phi_p(\delta\xi_1)\big(1+O(\omega_\delta)\big)\,.
		$$
		Plugging this estimate into \eqref{eq:vdmv} we find $\norm{v_\delta-v}=O(\omega_\delta)$, which implies
		$$
		\big|D^2v_\delta(e_1)-D^2v(e_1)\big|\le C\delta^2+O(\omega_\delta)=O(\omega_\delta).
		$$
		We compute $D^2 v_\delta(e_1)$ and $D^2v(e_1)$ as in the previous step. This implies
		$$
		\langle D^2u(x)\nu(x),\nu(x)\rangle=(p-1)^{-p'}d(x)^{-p'}\big(1+O(\omega_d)\big)\,.
		$$
		Since $\langle D^2u(x)\tau(x),\tau(x)\rangle$, $\langle D^2u(x)\tau(x),\nu(x)\rangle\in o(\langle D^2u(x)\nu(x),\nu(x)\rangle)$, we have proved \eqref{eq:taylorD2u}.
	\end{proof}
	
	\subsection{The linearized HJB equation.} 
	
	As it is often needed in the study of MFG systems, we will need to analyze the properties of the linearized HJB equation. In particular, we need 
	a well-posedness result for a transport-diffusion problem with singular drift. This result is an adaptation of the parabolic case proved in \cite{MFGinv} and plays a crucial role to show the well-posedness of the Fokker-Planck equation.
	\vskip0.4em
	
	Let $\delta\ge0$, $g\in L^\infty(\Omega)$, $b\in L^\infty_{loc}(\Omega)$. We study the following   equation
	\begin{equation}\label{hjb2}
		\begin{cases}
			-\Delta\phi + b(x)\cdot\nabla\phi+\delta\phi=g(x)\,,\\
			\phi\in L^\infty(\Omega)\,,
		\end{cases}
	\end{equation}
	where an \emph{invariance condition} is prescribed on the underlying dynamics, as in \cite{MFGinv}. In the present setting, the invariance  condition  reads as the following assumption on the drift term: $\exists\delta_0, C>0$ s.t.
	\begin{equation}\label{eq:inv}
		\Delta d(x)-b(x)\cdot\nabla d(x)\ge\frac1{d(x)}-Cd(x)\,,\qquad\forall\,x\in\Gamma_{\delta_0}\,.
	\end{equation}
	We note that this condition is satisfied for the linearized HJB equation arising from \rife{eq:u}.  Indeed, if  $b(x)=p|\nabla u|^{p-2}\nabla u=-\alpha(x)$, by \eqref{stime-feed} we have
	\begin{equation*}
		\begin{split}
			-b(x)\cdot\nabla d(x)=\frac{p'}{d(x)}(1+o(1))\,,
		\end{split}
	\end{equation*}
	which clearly implies \eqref{eq:inv} since $p'>1$.  In fact, we note that  in this case even a  stronger condition than \rife{eq:inv} is satisfied, namely
	\begin{equation}\label{inv_stronger}
		\begin{split}
			\exists \,\, \beta_0>1 \,:\,\quad \Delta d(x)-b(x)\cdot\nabla d(x)\ge\frac{\beta_0}{d(x)}(1+o(1))\,,\qquad\forall\,x\in\Gamma_{\delta_0}\,.
		\end{split}
	\end{equation}
	
	\begin{defn}\label{defhjb2}
		We say that $\phi\in L^\infty(\Omega)\cap H^1_{loc}(\Omega)$ is a weak solution of \eqref{hjb2} if for all $\xi\in C^\infty_c(\Omega)$ it holds
		$$
		\into \big(\nabla\phi(x)\cdot(\nabla\xi(x)+b(x)\xi(x))+\delta\phi(x)\xi(x)\big)\,dx=\into g(x)\xi(x)\,dx
		$$
	\end{defn}
	
	We start with the case $\delta>0$.
	\begin{prop}\label{prop:hjdelta}
		Let $\delta>0$, $g\in L^\infty(\Omega)$ and assume $b\in L^\infty_{loc}(\Omega)$ satisfies \eqref{eq:inv}. Then there exists a unique solution of \eqref{hjb2}, in the sense of Definition \ref{defhjb2}.
	\end{prop}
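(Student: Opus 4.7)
The argument combines an approximation scheme on the interior subdomains $\Omega_n=\{d(x)>1/n\}$, where the drift is bounded and classical theory applies, with a comparison principle based on a singular barrier that exploits the invariance condition \eqref{eq:inv}. The key ingredient is a smooth function $W\in C^2(\Omega)$ satisfying $W(x)=A-B\log d(x)$ in a neighborhood of $\partial\Omega$ and constant away from the boundary, with $A,B>0$ to be chosen. Using $|\nabla d|=1$ near $\partial\Omega$ together with \eqref{eq:inv}, a direct computation gives
$$-\Delta W + b\cdot\nabla W = -\frac{B}{d(x)^2}+\frac{B}{d(x)}\bigl(\Delta d(x)-b(x)\cdot\nabla d(x)\bigr)\ge -BC\quad\text{in }\Gamma_{\delta_0},$$
the last inequality arising from the exact cancellation of the $1/d^2$ terms. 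Hence $-\Delta W+b\cdot\nabla W+\delta W\ge -BC+\delta(A-B\log d)\ge 0$ near $\partial\Omega$ for $A$ large, while away from the boundary $W$ is bounded and the same inequality follows by increasing $A$ further. Moreover $W(x)\to+\infty$ as $d(x)\to 0$.

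For existence, I would solve the Dirichlet problem
$$-\Delta\phi_n+b\cdot\nabla\phi_n+\delta\phi_n=g\quad\text{in }\Omega_n,\qquad \phi_n=0\text{ on }\partial\Omega_n.$$
Since $b\in L^\infty(\Omega_n)$, classical linear elliptic theory (Fredholm alternative plus the weak maximum principle, available thanks to $\delta>0$) produces a unique $\phi_n\in H^1_0(\Omega_n)\cap W^{2,r}(\Omega_n)$ for all $r<\infty$, and Stampacchia's truncation yields the uniform bound $\|\phi_n\|_\infty\le\|g\|_\infty/\delta$. Extending $\phi_n$ by zero and testing the equation with $\phi_n\chi^2$ for $\chi\in C^\infty_c(\Omega)$ gives uniform $H^1$ bounds on $\mathrm{supp}(\chi)$; a diagonal extraction produces a weak limit $\phi\in L^\infty(\Omega)\cap H^1_{loc}(\Omega)$, and passage to the limit in the weak formulation is immediate since $b$ is bounded on every compact subset of $\Omega$.

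For uniqueness, let $\phi_1,\phi_2$ be two bounded weak solutions and set $w=\phi_1-\phi_2$. Interior elliptic regularity gives $w\in W^{2,r}_{loc}(\Omega)\subset C^1(\Omega)$, and $-\Delta w+b\cdot\nabla w+\delta w=0$ pointwise. For any $\varepsilon>0$, the continuous function $w-\varepsilon W$ is bounded above on $\Omega$ and tends to $-\infty$ as $d(x)\to 0$, so it attains its supremum at some interior point $x_0$. There $\nabla w(x_0)=\varepsilon\nabla W(x_0)$ and $-\Delta(w-\varepsilon W)(x_0)\ge 0$, whence
$$\delta(w-\varepsilon W)(x_0)\le\bigl(-\Delta+b\cdot\nabla+\delta\bigr)(w-\varepsilon W)(x_0)=-\varepsilon\bigl(-\Delta W+b\cdot\nabla W+\delta W\bigr)(x_0)\le 0.$$
Hence $w\le\varepsilon W$ in $\Omega$, and letting $\varepsilon\to 0$ pointwise gives $w\le 0$; by symmetry $w\equiv 0$. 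The principal obstacle is the barrier construction itself: the logarithmic profile of $W$ is dictated precisely by \eqref{eq:inv}, since only this scale produces the exact cancellation of the $1/d^2$ singularity in $-\Delta W+b\cdot\nabla W$. A weaker invariance bound would leave this term unbounded below and the comparison argument would break down.
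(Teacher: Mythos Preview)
Your proposal is correct and follows essentially the same strategy as the paper: approximation on the interior subdomains with uniform $L^\infty$ bounds (the paper uses Neumann conditions rather than Dirichlet, but this is immaterial), local energy estimates to pass to the limit, and uniqueness via the logarithmic barrier $-\log d(x)$ whose supersolution property comes precisely from the cancellation forced by \eqref{eq:inv}. The paper packages the uniqueness argument by perturbing one solution directly as $\phi+\varepsilon(M-\log d)$ rather than building a separate barrier $W$, but the content is the same.
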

	\begin{proof}
		For the existence part, we consider the set of functions $\{\phi_\eps\}_\eps\subseteq L^\infty(\Omega)$ which 
		solves in $\Omega_\eps$ the equation
		\begin{equation}\label{eq:lella}
			\begin{cases}
				-\Delta\phi_\eps(x)+b(x)\cdot\nabla\phi_\eps(x)+\delta\phi_\eps(x)=g(x)\,,\\
				\nabla\phi_\eps(x)\cdot\nu(x)_{|\partial\Omega_\eps}=0\,.
			\end{cases}
		\end{equation}
		Since $g\in L^\infty(\Omega)$ and $\delta>0$, we have by the maximum principle
		$$
		\sup\limits_\eps\norminf{\phi_\eps}\le C\implies\exists\phi\in L^\infty(\Omega)\,\mbox{ s.t. }\phi_\eps\weak^*\phi\,,
		$$
		with a weak*-convergence in $L^\infty(\Omega)$ (here we consider $\phi_\eps$ as the zero extension in $\Omega\setminus\Omega_\eps$).  We also have, in a standard way, local energy estimates. Indeed,  multiplying the equation of $\phi_\eps$ for $\phi_\eps\xi_r^2$, where $\xi_r\in C_c^\infty(\Omega)$ s.t. $\xi_r=1$ in $\Omega_r$ and $\norminf{\nabla\xi_r}\le\frac 2{r}$, we find
		$$
		\into |\nabla\phi_\eps|\xi_r^2\,dx+2\into\nabla\phi_\eps\cdot \nabla\xi_r\,\phi_\eps\xi_r\,dx+\into b\cdot\nabla\phi_\eps\,\phi_\eps\xi_r^2\,dx=\into (g-\delta\phi_\eps)\phi_\eps\xi_r^2\,dx\,.
		$$
		Using a generalized Young's inequality and the  local $L^\infty $ bound on $b$, we get
		$$
		\int_{\Omega_r} |\nabla\phi_\eps|^2\,dx\le C_r\,,
		$$
		for a certain $C_r>0$ not depending on $\eps$. Hence $\phi_\eps$ is bounded  in $L^\infty(\Omega)$ and locally in $H^1$, uniformly in $\eps$, which is enough to pass to the limit for $\eps\to 0$ in the weak formulation of $\phi_\eps$, with test function $\xi\in C_c^\infty(\Omega)$. This concludes the existence part.
		
		As regards uniqueness, we use the same argument  as exploited in similar contexts in \cite[Thm 6.2]{BCR}, \cite[Thm 3.8]{MFGinv}. We consider two bounded solutions $\phi$ and $\varphi$; for $M,\eps>0$, we consider a slight perturbation of $\phi$, namely
		$$
		\phi_\eps=\phi+\eps(M-\log d(x))\,.
		$$
		We obtain
		\begin{align*}
			-\Delta\phi_\eps+b\cdot\nabla\phi_\eps+\delta\phi_\eps=g+\eps\left(\frac{\Delta d}{d}-\frac{b\cdot\nabla d}d-\frac{|\nabla d|^2}{d^2} + \de(M-\log d)\right)\,.
		\end{align*}
		The term in the parenthesis is non-negative for $M$ large, thanks to \eqref{eq:inv}. Moreover, $\phi_\eps$ blows up at the boundary, whereas $\varphi$ is a bounded function. Hence, we get
		$$
		-\Delta\phi_\eps+b\cdot\nabla\phi_\eps+\delta\phi_\eps\ge-\Delta\varphi+b\cdot\varphi+\delta\varphi\,,\qquad {\phi_\eps}_{|\partial\Omega} >\varphi_{|\partial\Omega}\,,
		$$
		which implies, for the maximum principle, $\phi_\eps\ge\varphi$ $\forall\eps>0$. Hence $\phi\ge\varphi$. Exchanging the role of $\phi$ and $\varphi$, we get $\phi=\varphi\,$.
	\end{proof}
	
	With a strengthening of hypotheses, we can also prove a Lipschitz bound for the solutions of \eqref{hjb2}. This result holds for all $\delta\ge0$ and is a crucial estimate for the case $\delta=0$. The strategy goes through uniform gradient bounds  for the approximated problems.
	
	{\color{black}
		
		\begin{prop}\label{prop:Lip}
			Let $\eps>0$. We consider $\{\delta_\eps\}_\eps\subset\R_{\ge0}$, $\{g_\eps\}_\eps\subset W^{1,\infty}(\Omega)$, which are uniformly bounded, respectively, in $\R$ and $W^{1,\infty}(\Omega)$. Assume that  $\{b_\eps\}_\eps$ is bounded  in $W^{1,\infty}(\cK)$ for any compact subset $\cK\subset\Omega$ and satisfies, for a certain $C\geq0$ and $\delta_0,\gamma_0>0$, \eqref{eq:inv} and
			\begin{equation}\label{eq:condJac}
				\mathrm{Jac}\,b_\eps(x)\ge -Cd(x)^{\gamma_0-2}I_{n\times n}\,,\qquad \forall\,x\in\Gamma_{\delta_0}\,,\forall\,\eps>0\,,
			\end{equation}
			where $I_{d\times d}$ is the identity matrix in $\R^{d\times d}$. Then the solution $\phi_\eps$ of the following problem
			\begin{equation}\label{eq:phieps}
				\begin{cases}
					-\Delta\phi_\eps(x)+b_\eps(x)\cdot\nabla\phi_\eps(x)+\delta_\eps\phi_\eps(x)=g_\eps(x)\,,\quad x\in \Omega_\eps\\
					\nabla\phi_\eps(x)\cdot\nu_\eps(x)_{|\partial\Omega_\eps}=0
				\end{cases}
			\end{equation}
			satisfies
			\begin{equation}\label{gradest}
				\begin{split}
					\|\nabla\phi_\eps\|_{L^\infty(\Omega_\eps)} \leq K \|  g_\eps\|_{W^{1,\infty}(\Omega_\eps)}
				\end{split}
			\end{equation}
			for some $K$ depending on $\Omega$, $\gamma_0$ and $b$ (through \rife{eq:inv} and \rife{eq:condJac}), but independent on $\vep$.
			
			Besides, suppose that, instead of \eqref{eq:condJac}, $b_\eps$ satisfies the weaker condition
			\begin{equation}\label{weak_Jac}
				\begin{split}
					\mathrm{Jac}\,b_\eps(x)\ge -Cd(x)^{-2}\kappa(x)I_{n\times n}\,,\qquad \forall\,x\in\Gamma_{\delta_0}\,,\forall\,\eps>0\,,
				\end{split}
			\end{equation}
			where $\kappa(x)\to0$ when $x\to\partial\Omega$. Then, if the stronger invariance condition \eqref{inv_stronger} is satisfied for some $\beta_0>1$, we have for every $a>0$:
			\begin{equation}\label{holdest}
				\begin{split}
					\norm{d^a(\cdot)\nabla\phi_\eps(\cdot)}_{L^\infty(\Omega_\eps)}\le K_a\norm{g_\eps}_{W^{1,\infty}(\Omega_\eps)}\,,
				\end{split}
			\end{equation}
			for some $K$ depending on $a,\Omega$ and $b$, but independent on $\eps$.
		\end{prop}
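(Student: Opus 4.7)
The natural tool is the Bernstein method adapted from the parabolic analogue in \cite{MFGinv}. Writing $W := |\nabla\phi_\eps|^2$, differentiating \eqref{eq:phieps} in each coordinate direction $\partial_k$, multiplying by $2\partial_k\phi_\eps$ and summing yields the Bochner-type identity
$$-\Delta W + b_\eps\cdot\nabla W + 2\delta_\eps W + 2|D^2\phi_\eps|^2 + 2(\mathrm{Jac}\,b_\eps)\nabla\phi_\eps\cdot\nabla\phi_\eps = 2\nabla g_\eps\cdot\nabla\phi_\eps$$
on $\Omega_\eps$. Dropping the non-negative Hessian term, applying Young's inequality to the right-hand side and using \eqref{eq:condJac}, I get
$$-\Delta W + b_\eps\cdot\nabla W + 2\delta_\eps W \le 2Cd(x)^{\gamma_0-2}W + W + \|g_\eps\|_{W^{1,\infty}}^2.$$

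To apply the maximum principle on $\Omega_\eps$ I must rule out $\partial\Omega_\eps$ as the location of the maximum. Since $\nabla\phi_\eps\cdot\nu_\eps=0$ there, $\nabla\phi_\eps$ is tangential; tangential differentiation of the boundary condition together with the $C^2$-regularity of $\partial\Omega_\eps$ (whose second fundamental form is bounded uniformly in $\eps$ for $\eps$ small) yields $|\partial_{\nu_\eps} W|\le C_0 W$ on $\partial\Omega_\eps$ with $C_0$ independent of $\eps$. Replacing $W$ by $\widetilde W:=W\,e^{Ad(x)}$ with $A>C_0$ then forces $\partial_{\nu_\eps}\widetilde W\le 0$ there, so the maximum of $\widetilde W$ is attained inside $\Omega_\eps$.

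Next I construct a barrier that beats the singular coefficient $d^{\gamma_0-2}$. The invariance condition \eqref{eq:inv} implies $b_\eps\cdot\nabla d\le -1/d+O(1)$ in $\Gamma_{\delta_0}$; a direct calculation then shows that $\Psi(x):=-d(x)^{\gamma_0}$ satisfies
$$-\Delta\Psi+b_\eps\cdot\nabla\Psi\ge \gamma_0^2\,d(x)^{\gamma_0-2}+O(d(x)^{\gamma_0-1})\qquad \text{in } \Gamma_{\delta_0}.$$
Applying the weak maximum principle to $\widetilde W - M\Psi$ on $\Omega_\eps$ with $M$ proportional to $\|g_\eps\|_{W^{1,\infty}}^2$, the singular term $2Cd^{\gamma_0-2}W$ is absorbed by the barrier and $\widetilde W$ can be large only at an interior point at positive distance from $\partial\Omega$, where the coefficient is bounded and a standard $L^\infty$ argument closes. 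Since $\widetilde W$ and $W$ differ by a bounded factor, this yields \eqref{gradest}.

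For the weighted estimate \eqref{holdest} under \eqref{weak_Jac}, I run the same scheme with $W_a:=d(x)^{2a}|\nabla\phi_\eps|^2$ in place of $W$. The Bochner identity acquires an extra drift-type contribution $2a\,d^{2a-1}(b_\eps\cdot\nabla d)|\nabla\phi_\eps|^2$ on the left-hand side which, by the stronger invariance \eqref{inv_stronger}, is bounded below by $2a\beta_0\,d(x)^{-2}W_a(1+o(1))$. Since $\beta_0>1$ and $\kappa(x)\to 0$ at $\partial\Omega$, this positive contribution dominates the now-weakened Jacobian term $C\kappa(x)d^{-2}W_a$ coming from \eqref{weak_Jac}, and the previous max-principle/barrier argument delivers the desired uniform bound on $W_a$. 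The hardest step is the joint control of the singular Jacobian and the singular drift by the invariance conditions: the estimates are tight precisely because $\beta_0>1$ (resp. $\gamma_0>0$) is exactly what is needed to beat the worst-case $d^{-2}$ scaling at $\partial\Omega$.
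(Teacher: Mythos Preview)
Your Bernstein/Bochner set-up is correct and matches the paper's Step~1, but there are two genuine gaps.

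\textbf{The barrier does not absorb the singular term.} After your Bochner inequality you face $2Cd^{\gamma_0-2}W$ on the right, which is \emph{linear in $W$}, whereas your barrier $\Psi=-d^{\gamma_0}$ only contributes the constant $-M\gamma_0^2 d^{\gamma_0-2}$. At an interior maximum of $\widetilde W-M\Psi$ you obtain
$0\le (2Cd(x_0)^{\gamma_0-2}+1)\widetilde W(x_0)+\|g_\eps\|_{W^{1,\infty}}^2 - M\gamma_0^2 d(x_0)^{\gamma_0-2}$,
which gives no upper bound on $\widetilde W(x_0)$ (the coefficient of $\widetilde W(x_0)$ is positive). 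What the paper does instead is to build a \emph{singular} weight into the function itself: with $w_\eps=|\nabla\phi_\eps|^2 e^{\theta(d)}$ and $\theta(d)=d^\gamma$, $0<\gamma<\gamma_0$, the equation for $w_\eps$ acquires a zeroth-order coefficient $C_\eps(x)\sim \gamma^2 d^{\gamma-2}$ which \emph{dominates} $d^{\gamma_0-2}$ near $\partial\Omega$. Then the maximum principle on the annulus $\Gamma_{\delta_0}\cap\Omega_\eps$ directly yields $\sup_{\Gamma_{\delta_0}}|\nabla\phi_\eps|\le C_0+\sup_{\{d=\delta_0\}}|\nabla\phi_\eps|$. (The same mechanism, with $\theta(d)=\gamma\log d$ and $\gamma<\beta_0-1$, gives the weighted version.) Your additive barrier cannot replace this multiplicative weight.

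\textbf{The interior bound is not ``standard''.} Even granting the annulus estimate, you still need $\sup_{\overline\Omega_{\delta_0}}|\nabla\phi_\eps|\le C$ uniformly in $\eps$. This is the heart of the matter and is \emph{not} a routine $L^\infty$ argument: when $\delta_\eps\to 0$ there is no uniform bound on $\|\phi_\eps\|_{L^\infty}$, so interior Schauder/De Giorgi estimates give nothing uniform. The paper's Step~2 proceeds by contradiction and compactness: assuming $\|\nabla\phi_\eps\|_{L^\infty(K_0)}\to\infty$, one rescales $\hat\phi_\eps=(\phi_\eps-\phi_\eps(x_0))/\|\nabla\phi_\eps\|_{L^\infty(K_0)}$, uses the annulus estimate to get global Lipschitz (or H\"older) bounds, and passes to a limit $\hat\phi\in L^\infty(\Omega)$ solving $-\Delta\hat\phi+b\cdot\nabla\hat\phi=0$ with $\|\nabla\hat\phi\|_{L^\infty(K_0)}=1$. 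The contradiction comes from a Liouville-type strong maximum principle (Lemma~\ref{smp} in the paper), which uses the invariance condition \eqref{eq:inv} in an essential way to force $\hat\phi$ to be constant. Your proposal omits this step entirely; without it the proof does not close.
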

		
		\begin{rem} We observe that condition \eqref{eq:condJac} is satisfied at least in our framework, for $p\neq2$. Actually, since $b(x)=p|\nabla u|^{p-2}\nabla u=-\alpha(x)$, we have
			$$
			\mathrm{Jac}\,b(x)=p|\nabla u|^{p-2}D^2u+p(p-2)|\nabla u|^{p-4}D^2u Du\otimes Du\,.
			$$
			Using \eqref{eq:stimeDu} and \eqref{eq:taylorD2u} for $p\neq 2$, one has
			\begin{equation*}
				\begin{split}
					\mathrm{Jac}\,b(x)=\frac{p'}{d(x)^2}\big[\nu(x)\otimes\nu(x)+O(\omega_d)\big]\,.
				\end{split}
			\end{equation*}
			Now, for each $\xi\in\R^n$ we have
			$$
			\langle \nu(x)\otimes\nu(x)\xi,\xi\rangle=|\langle\xi,\nu\rangle|^2\ge0\,.
			$$
			This implies, for $p\neq 2$,
			\begin{equation}\label{eq:Jacc}
				\begin{split}
					\langle(\mathrm{Jac}\,b)\,\xi,\xi\rangle=\frac{p'}{d(x)^2}\big[|\langle\xi,\nu\rangle|^2+O(\omega_d)\big]\ge -C_pd(x)^{\sigma_p-2}\,,
				\end{split}
			\end{equation}
			where $\sigma_p>0$, as can be seen from the definition of $\omega_d$ in \eqref{def:omega}.
			
			For $p=2$, the weaker condition \eqref{weak_Jac} can be obtained, using \eqref{eq:stimeD2u} instead of \eqref{eq:taylorD2u}:
			\begin{equation}\label{eq:Jacc2}
				\begin{split}
					\langle(\mathrm{Jac}\,b)\,\xi,\xi\rangle=\frac{2}{d(x)^2}\big[|\langle\xi,\nu\rangle|^2+o(1)\big]\ge-d(x)^2\kappa(x)\,,
				\end{split}
			\end{equation}
			where $\kappa(x)\to0$ when $x\to\partial\Omega$.
		\end{rem}
		
		\vskip1em
		The proof of Proposition \ref{prop:Lip} will rely on the following strong maximum principle.
		
		\begin{lem}\label{smp} Let $\Omega$ be  a connected domain and assume that $b\in W^{1,\infty}_{loc}(\Omega)$ satisfies \eqref{eq:inv}. Then any function $v$ which is a weak solution of
			$$
			\begin{cases}
				-\Delta  v+b (x)\cdot \nabla  v  =0  &   \hbox{in $\Omega$}\,,\\
				v\in L^\infty(\Omega) & 
			\end{cases}
			$$
			is constant in $\Omega$.
		\end{lem}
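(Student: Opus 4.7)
The plan is to argue by contradiction via the classical interior strong maximum principle combined with a Lyapunov-type strict supersolution that diverges at $\partial\Omega$. After subtracting a constant I may assume $\inf_\Omega v=0$, and suppose for contradiction that $v$ is non-constant. Since $b\in W^{1,\infty}_{loc}(\Omega)$, the classical strong maximum principle is available on any ball relatively compact in $\Omega$; together with the connectedness of $\Omega$, this implies that if $v(x)=0$ at some $x\in\Omega$ then $v\equiv 0$, contradicting non-constancy. Hence $v>0$ throughout $\Omega$, and the value $\inf_\Omega v=0$ can only be approached as $d(x)\to 0$.

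The key step will be the construction of a function $\psi$ with $\psi(x)\to+\infty$ as $d(x)\to 0$ and $-\Delta\psi+b\cdot\nabla\psi\ge c_0>0$ on some strip $\Gamma_{\delta_1}$. The natural choice $-\log d$ only yields
\[
-\Delta(-\log d)+b\cdot\nabla(-\log d)=\frac{\Delta d-b\cdot\nabla d}{d}-\frac{|\nabla d|^2}{d^2}\ge -C,
\]
which is borderline under \eqref{eq:inv}. I would therefore set $\psi:=-\log d-K d^2$; using $|\nabla d|=1$ near $\partial\Omega$ and applying \eqref{eq:inv} both to the logarithmic contribution and to the extra term $2Kd(\Delta d-b\cdot\nabla d)$ produced by the quadratic correction, one obtains
\[
-\Delta\psi+b\cdot\nabla\psi\ge 4K-C-2CK d(x)^2 \ge K
\]
in $\Gamma_{\delta_1}$, provided $K$ is large enough and $\delta_1$ small enough, while clearly $\psi(x)\to+\infty$ as $d(x)\to 0$.

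With $\psi$ in hand, I would fix $\eps>0$ and set $w_\eps:=v+\eps\psi$, so that $-\Delta w_\eps+b\cdot\nabla w_\eps\ge\eps K>0$ on $\Gamma_{\delta_1}$ and $w_\eps(x)\to+\infty$ as $d(x)\to 0$. A strict supersolution cannot attain an interior minimum (at such a point $\nabla w_\eps=0$ and $\Delta w_\eps\ge 0$ would give $-\Delta w_\eps+b\cdot\nabla w_\eps\le 0$, contradicting strict positivity), so the infimum of $w_\eps$ over $\overline{\Gamma_{\delta_1}}$ is attained on $\partial\Omega_{\delta_1}$. For every $x\in\Gamma_{\delta_1}$ this gives
\[
v(x)\ge\min_{\partial\Omega_{\delta_1}}v+\eps\bigl(\min_{\partial\Omega_{\delta_1}}\psi-\psi(x)\bigr),
\]
and letting $\eps\to 0$ at fixed $x$ yields $v(x)\ge m_0:=\min_{\partial\Omega_{\delta_1}}v$. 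Since the first paragraph guarantees $m_0>0$, and since the usual weak maximum principle on the compact set $\overline{\Omega_{\delta_1}}$ delivers $v\ge m_0$ there as well, I would conclude $v\ge m_0>0$ in all of $\Omega$, contradicting $\inf_\Omega v=0$. Hence $v$ is constant.

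The main obstacle is precisely the borderline character of \eqref{eq:inv}: the $-\log d$ barrier only provides a slack of $-C$, and it is essential to add a quadratic correction $-Kd^2$ so that the $1/d$ lower bound in \eqref{eq:inv} is consumed a second time (through $2Kd(\Delta d-b\cdot\nabla d)$) to feed a positive amount of order $2K$ into $-\Delta\psi+b\cdot\nabla\psi$, thus breaking the threshold. If instead one had the stronger invariance \eqref{inv_stronger} with $\beta_0>1$, the bare choice $\psi=-\log d$ would already produce $-\Delta\psi+b\cdot\nabla\psi\ge (\beta_0-1)/d^2$ near $\partial\Omega$ and the quadratic correction would be unnecessary.
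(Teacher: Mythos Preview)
Your proof is correct and follows the same overall architecture as the paper: build a barrier that blows up at $\partial\Omega$, compare in the strip $\Gamma_{\delta_1}$, send the auxiliary parameter to zero, and finish with the interior strong maximum principle. The difference lies in how the borderline condition \eqref{eq:inv} is upgraded to strictness. The paper perturbs the \emph{operator} by a zero-order term, noting that $v$ is a subsolution of $-\Delta v+b\cdot\nabla v+\vep v=\vep\|v\|_\infty$, and then takes $V=M_0-\vep\log d$; the extra $\vep V$ contribution (of order $-\vep^2\log d$) is what pushes the supersolution inequality over the threshold. You instead keep the operator untouched and perturb the \emph{barrier}, adding $-Kd^2$ to $-\log d$ so that the term $2Kd(\Delta d-b\cdot\nabla d)\ge 2K(1-Cd^2)$ consumes the $1/d$ lower bound in \eqref{eq:inv} a second time and produces a uniformly positive right-hand side. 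Your variant has the minor advantage that $\psi$ and $\delta_1$ are fixed independently of $\vep$, which makes the limit $\vep\to 0$ entirely transparent; the paper's version is equally valid but requires observing that the threshold $\de_0$ can be chosen uniformly.
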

		
		\begin{proof} Up to  adding a constant to $v$, we can assume that $v\geq 0$.  Let us take $\vep>0$, sufficiently small; the function $v$ is clearly a subsolution of the problem
			$$
			-\Delta  v+b (x)\cdot \nabla  v + \vep v  = \vep \|v\|_\infty   \qquad  \hbox{in $\Omega\setminus \Omega_{\de_0}$.}
			$$
			Now we  consider $V:= M_0-  \vep \log(d(x))$, where $M_0:= \sup\limits_{\{d(x)=\de_0\} } v$. Using \rife{eq:inv} we have
			\begin{align*}
				-\Delta  V & +b (x)\cdot \nabla  V + \vep V  - \vep \|v\|_\infty  \\ 
				& =  \frac\vep{d(x) } \left(-\frac1{d(x) }- b(x) \cdot \nabla d(x) +\Delta d(x) - d(x) \log (d(x)) + d(x)\Big[ M_0  - \|v\|_\infty\Big]\right) \\
				& \geq \frac\vep{d(x) } \big( - d(x) \log (d(x)) - C d(x)\big)
			\end{align*}
			hence we get  
			$$
			-\Delta  V+b (x)\cdot \nabla  V + \vep V  \geq  \vep \|v\|_\infty   \qquad  \hbox{in $\Omega\setminus \Omega_{\de_0}$,}
			$$
			provided $\de_0$ is sufficiently small. Since $V\to +\infty$ as $x\to \partial \Omega$, and $V\geq v$ on $\{d(x)=\de_0\}$, by comparison principle we deduce that $v\leq V$ in $\Omega\setminus \Omega_{\de_0}$. This means $v(x) \leq  \sup\limits_{\{d(x)=\de_0\} } v-  \vep \log(d(x))$; letting $\vep \to 0$ yields
			$$
			\sup\limits_{\{d(x)<\de_0\} } v \leq  \sup\limits_{\{d(x)=\de_0\} } v\,.
			$$
			Since $b$ is locally bounded, we easily get $v(x) \leq \sup\limits_{\{d(x)=\de_0\} } v$ for every $x\in \Omega_{\de_0}$, by usual maximum  principle. Hence, we obtain that
			$$
			\sup\limits_{\Omega } v = \max\limits_{\{d(x)=\de_0\} } v\,.
			$$
			This means that $v$ attains a global  maximum at some point $x_0$ inside $\Omega$. Being $b$ smooth in $\Omega$, the strong maximum principle applies and we deduce that $v$ is constant in $\Omega$ (which is assumed to be connected).
		\end{proof}
		
		Now we are ready to give the proof of Proposition \ref{prop:Lip}.
		\vskip2em
		
		\begin{proof}[Proof of Proposition \ref{prop:Lip}]
			
			\emph{Step 1.} Let $\phi_\eps$ be a solution of \eqref{eq:phieps}, where we assume that $\de_\eps\geq 0$ and $g_\eps$ satisfies $\|  g_\eps\|_{W^{1,\infty}(\Omega_\eps)}\leq 1$.
			We begin by proving that, if \eqref{eq:inv} and \eqref{eq:condJac} hold, then there exist $\delta_0, C_0>0$, independent on $\eps$, such that
			\begin{equation}\label{eq:stimacorona}
				\sup\limits_{x\in\Gamma_{\delta_0}}|\nabla\phi_\eps(x)|\le  C_0+ \sup\limits_{x\in\overline \Omega_{\delta_0}}|\nabla\phi_\eps(x)|\,,
			\end{equation}
			whereas, if \eqref{inv_stronger} and \eqref{weak_Jac} hold, then $\forall\beta>0$ there exist $\delta_0=\delta_0(\beta)$, $C_0=C_0(\beta)$, both independent on $\eps$, such that
			\begin{equation}\label{weak_stimacorona}
				\sup\limits_{x\in\Gamma_{\delta_0}}\big[d^\beta(x)|\nabla\phi_\eps(x)|\big]\le  C_0\left(1+ \sup\limits_{x\in\overline \Omega_{\delta_0}}|\nabla\phi_\eps(x)|\right)\,.
			\end{equation}
			To this purpose, we argue similarly as in \cite{Por_Leon}; we consider the function $w_\eps=|\nabla\phi_\eps|^2e^{\theta(d)}$, where $\theta(\cdot):(0,+\infty)\to\R$ is a smooth function bounded from above, which will be defined later. The gradient and the Laplacian of $w_\eps$ are computed as follows:
			\begin{equation*}
				\begin{split}
					\nabla w_\eps &= 2e^{\theta(d)}\nabla\phi_\eps D^2\phi_\eps+w_\eps\theta'(d)\nabla d\,,\\
					\Delta w_\eps &= 2e^{\theta(d)}\left[\mathrm{tr}(D^2\phi_\eps D^2\phi_\eps)+\nabla\phi_\eps\cdot\nabla(\Delta\phi_\eps)+\theta'(d)(D^2\phi_\eps\nabla\phi_\eps)\cdot\nabla d\right]\\
					&+\theta'(d)\nabla w_\eps\cdot\nabla d+w_\eps\theta''(d)|\nabla d|^2+w_\eps\theta'(d)\Delta d\,.
				\end{split}
			\end{equation*}
			For $\delta_0$ sufficiently small we have $|\nabla d|=1$. Hence, using \eqref{eq:phieps}, we easily obtain the equation satisfied by $w_\eps$ in $\Gamma_{\delta_0}$:
			\begin{equation*}
				\begin{split}
					-\Delta w_\eps(x) +B_\eps(x)\cdot\nabla w_\eps(x)+ C_\eps(x)w_\eps(x)=G_\eps(x)\,,
				\end{split}
			\end{equation*}
			where
			\begin{equation*}
				\begin{split}
					B_\eps&=b_\eps+2\theta'(d)\nabla d\,,\\
					C_\eps&=\delta_\eps+(\Delta d-b_\eps\nabla d)\theta'(d)+(\theta''(d)-\theta'(d)^2)\,,\\
					G_\eps&=2e^{\theta(d)}\left[-\mathrm{tr}(D^2\phi_\eps D^2\phi_\eps)-(\mathrm{Jac}\,b_\eps)\nabla\phi_\eps\cdot\nabla\phi_\eps+\nabla g_\eps\cdot\nabla\phi_\eps\right]\,.
				\end{split}
			\end{equation*}
			In the first case, using the bound on $\nabla g_\eps$ and assumption \eqref{eq:condJac}, we easily get $G_\eps\le C(1+d^{\gamma_0-2}w_\eps)$, for a certain $C>0$. Hence, the equation on $w_\eps$ becomes
			\begin{equation*}
				\begin{split}
					-\Delta w_\eps(x)+B_\eps(x)\cdot\nabla w_\eps(x)+(C_\eps(x)-Cd^{\gamma_0-2})w_\eps(x)\le C\,.
				\end{split}
			\end{equation*}
			We choose $\theta(d)=d^\gamma$, with $0<\gamma<\gamma_0$. Using \eqref{eq:inv}, we can bound from below the coefficient of $w_\eps$:
			\begin{equation*}
				\begin{split}
					C_\eps(x)-Cd^{\gamma_0-2}&=\delta_\eps-Cd^{\gamma_0-2}+\gamma d^{\gamma-1}(\Delta d-b_\eps\cdot\nabla d)+\gamma(\gamma-1)d^{\gamma-2}-\gamma^2 d^{2\gamma-2}\\
					&\ge\gamma^2 d^{\gamma-2}(1+o(1))>0\,,
				\end{split}
			\end{equation*}
			provided $\delta_0$ is small enough. Hence, the maximum principle implies
			$$
			\sup\limits_{x\in\Gamma_{\delta_0}}w_\eps\le C+ \max\limits_{x\in\partial\left(\Gamma_{\delta_0}\cap\Omega_\eps\right)}w_\eps\,.
			$$
			The maximum on the right-hand side cannot be attained at some point $x_0\in\{d(x)=\eps\}$, since  we have $\partial_\nu w_\eps < 0$ on $\{d(x)=\eps\}$ thanks to the Neumann condition on $\phi_\eps$ (see e.g. \emph{Lemma 4} of \cite{Por_Leon}). Hence, the maximum is attained in $\{d(x)=\delta_0\}\subset \overline \Omega_{\delta_0}$, which proves \eqref{eq:stimacorona}.
			
			Now, assume that \eqref{inv_stronger} and \eqref{weak_Jac} hold. In that case, the bound on $G_\eps$ becomes $G_\eps\le C(1+d^{-2}\kappa w_\eps)$. Hence, the equation on $w_\eps$ becomes
			$$
			-\Delta w_\eps(x)+B_\eps(x)\cdot\nabla w_\eps(x)+(C_\eps(x)-C\kappa d^{-2})w_\eps(x)\le C\,.
			$$
			For $0<\gamma<\beta_0-1$, we choose $\theta(d)=\gamma\log d$, which means $e^{\theta(d)}=d^\gamma$. Using \eqref{inv_stronger}, we have the following bound for the coefficient of $w_\eps$:
			\begin{equation*}
				\begin{split}
					C_\eps(x)- C\kappa d^{-2}&=\delta_\eps-C-C\kappa d^{-2}+\gamma d^{-1}(\Delta d-b_\eps\cdot\nabla d)-\gamma d^{-2}-\gamma^2 d^{-2}\\
					&\ge d^{-2}\big(\gamma(\beta_0-1-\gamma)+o(1)\big)>0\,,
				\end{split}
			\end{equation*}
			provided $\delta_0$ (depending on $\gamma$) is small enough. We can apply the maximum principle, and as before we get
			$$
			\sup\limits_{x\in\Gamma_{\delta_0}}d^\gamma|\nabla\phi_\eps|^2=\sup\limits_{x\in\Gamma_{\delta_0}}w_\eps\le C+\sup\limits_{x\in\partial{\Omega}_{\delta_0}}w_\eps\le C+\delta_0^\gamma\sup\limits_{x\in\overline{\Omega}_{\delta_0}}|\nabla\phi_\eps|^2\,,
			$$
			which implies \eqref{weak_stimacorona}.\\
			
			\emph{Step 2.} To conclude, we claim that, calling $K_0$ the compact set $\overline \Omega_{\de_0}$, we have 
			\begin{equation}\label{eq:grad_int}
				\max\limits_{x\in K_0}|\nabla\phi_\eps(x)|\le \tilde C_0\,,
			\end{equation}
			for all $\eps>0$ and for a certain constant $\tilde C_0>0$.
			
			To show  \rife{eq:grad_int}, we argue by contradiction, and we suppose that, for some sequence (not relabeled), we have
			$$
			\|\nabla\phi_\eps\|_{L^\infty(K_0)} \to \infty\,.
			$$
			Let us suppose for now that $\de_\vep\to 0$, which means that we are considering  the ergodic problem. For a fixed $x_0\in\Omega$, we define 
			$$
			\hat \phi_\eps:=
				\frac{\phi_\eps(x)-\phi_\eps(x_0)}{\|\nabla\phi_\eps\|_{L^\infty(K_0)} } \,.
				$$
				Rescaling \rife{eq:phieps}, $\hat \phi_\eps$ satisfies
				\begin{equation}\label{cappuc}
					\begin{cases}
						-\Delta\hat \phi_\eps(x)+b_\eps(x)\cdot  \nabla\hat \phi_\eps(x)+\delta_\eps\hat \phi_\eps(x)=\frac{g_\eps(x)}{\|\nabla\phi_\eps\|_{L^\infty(K_0)}} - \frac{\delta_\eps \phi_\eps(x_0)}{\|\nabla\phi_\eps\|_{L^\infty(K_0)}} \,,\quad x\in \Omega_\eps\,,\\
						\nabla\hat \phi_\eps(x)\cdot\nu_\eps(x)_{|\partial\Omega_\eps}=0\,.
					\end{cases}
				\end{equation}
				Notice that, by maximum principle, we have $\|\delta_\eps \phi_\eps\|_\infty\leq \|g_\vep \|_\infty \leq C$; hence, the right-hand side in the previous equation vanishes as $\eps\to 0$ (because $\|\nabla\phi_\eps\|_{L^\infty(K_0)}\to \infty$).
				
				By Step 1, we have that either \eqref{eq:stimacorona} or \eqref{weak_stimacorona} holds. In the first case, we have 
				$$
				\sup\limits_{x\in\Gamma_{\delta_0}\cap \Omega_\vep} | \nabla \hat \phi_\eps|   
				\leq C_0+1
				$$
				and since $\| \nabla \hat \phi_\eps\|_{L^\infty(K_0)}=  1$ we deduce that $\nabla \hat \phi_\eps$ is uniformly bounded in $\Omega_\eps$. Since $\hat \phi_\eps(x_0)=0$, we conclude  that $\hat \phi_\eps$ is equibounded and uniformly Lipschitz continuous in $\Omega_\eps$.
				
				If \eqref{weak_stimacorona} holds, let $a\in(0,1)$. For $x\in\Gamma_{\delta_0}$ we have, for a $C$ depending on $\beta$,
				$$
				|\nabla\phi_\eps(x)|\le C d^{-\beta}(x)\left(1+\norm{\nabla\phi_\eps}_{L^\infty(K_0)}\right)\implies|\nabla\hat\phi_\eps(x)|\le Cd^{-\beta}(x)\,.
				$$
				Since $\| \nabla \hat \phi_\eps\|_{L^\infty(K_0)}=1$, we deduce that $\nabla\hat\phi_\eps\in L^r(\Omega_\eps)$ uniformly in $\eps$ and for $r<\frac1\beta$. Moreover, since $d^{-\beta}$ is integrable for small $\beta$, and $\hat\phi_\eps(x_0)=0$, we have that $\hat\phi_\eps$ is equibounded, and so $\hat\phi_\eps\in W^{1,r}(\Omega_\eps)$ uniformly in $\eps$. Choosing any $\beta<n^{-1}(1-a)$, this implies that $\hat\phi_\eps$ is uniformly $a$-H\"older continuous in $\Omega_\eps$.
				
				In both cases, we can apply Ascoli-Arzel\'a's theorem, which ensures the locally uniform convergence (up to subsequences) of $\hat \phi_\eps$ in $\Omega$ to some local Lipschitz function $\hat \phi$; in addition, by ellipticity, the convergence holds (at least) in $C^1$ for all compact subsets  of $\Omega$. Therefore, once we pass to the limit, $\hat \phi$ satisfies
				$$
				\begin{cases}
					-\Delta\hat \phi +b (x)\cdot \nabla\hat \phi=0\,,\\
					\hat \phi\in L^\infty(\Omega)\cap C^1 (\Omega)\,.
				\end{cases}
				$$
				In particular, $\hat\phi$ is a weak solution of the above equation, in the sense of Definition \ref{defhjb2}. 
				Applying now Lemma \ref{smp}, we deduce that $\hat \phi$ is constant. But since   $\| \nabla \hat \phi_\eps\|_{L^\infty(K_0)} =1$ and $\hat \phi_\eps\to \hat \phi$ in $C^1(K_0)$, we also have $\| \nabla \hat \phi \|_{L^\infty(K_0)} =1$. This is a contradiction. We conclude that \rife{eq:grad_int} holds true; and together with \rife{eq:stimacorona} and \rife{weak_stimacorona}, this concludes the proof. In the case that $\de_\eps$ converges (up to subsequences) towards some $\de>0$, we argue in the same way but we simply define $\hat \phi_\eps= \frac{\phi_\eps(x)}{\|\nabla\phi_\eps\|_{L^\infty(K_0)} }$. The conclusion follows as before.
				Finally, once the result is proved assuming $\| g_\eps\|_{W^{1,\infty}(\Omega_\eps)}\leq 1$, by linearity we get the estimates \rife{gradest} and \rife{holdest}.
			\end{proof}

		}
		
		As an immediate corollary, we have the following regularity estimate for the problem \eqref{hjb2} with $\delta>0$.
		\begin{cor}
			Let $\delta>0$, $g\in W^{1,\infty}(\Omega)$, $b\in W^{1,\infty}_{loc}(\Omega)$, satisfying \eqref{eq:inv} and \eqref{eq:condJac}. Then the solution $\phi$ of \eqref{hjb2} belongs to $W^{1,\infty}(\Omega)$.
			
			If the coefficients satisfy \eqref{inv_stronger} and \eqref{weak_Jac}, then the solution $\phi$ belongs to $C^a(\Omega)$ for every $0<a<1$.
			\begin{proof}
				We already know that  $\phi\in L^\infty$. For the bound of the gradient, it suffices to note that $\phi$ is the limit of $\phi_\eps$ which solves \eqref{eq:lella}. Since $g\in W^{1,\infty}(\Omega)$, we can apply to \eqref{eq:lella} the conclusion of Proposition \ref{prop:Lip}. Then $\phi_\eps$ is uniformly bounded in $W^{1,\infty}$ if \eqref{eq:inv} and \eqref{eq:condJac} holds, and in $C^a$ for every $a\in(0,1)$ if \eqref{inv_stronger} and \eqref{weak_Jac} hold. Passing to the limit for $\eps\to 0$ the same estimates hold for $\phi$.
			\end{proof}
		\end{cor}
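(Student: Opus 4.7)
The plan is to obtain the regularity of $\phi$ by passing to the limit in the approximating problems \eqref{eq:lella}, whose (weighted) gradient bounds are already furnished by Proposition \ref{prop:Lip}. Recall from the proof of Proposition \ref{prop:hjdelta} that $\phi$ is obtained as the weak-$*$ limit in $L^\infty(\Omega)$ of the sequence $\{\phi_\vep\}$ solving \eqref{eq:lella} in $\Omega_\vep$ with homogeneous Neumann data on $\partial\Omega_\vep$. Taking $g_\vep=g$ and $b_\vep=b$ (so the assumptions of Proposition \ref{prop:Lip} are trivially uniform in $\vep$), the whole game reduces to transferring the uniform estimates \eqref{gradest} or \eqref{holdest} to $\phi$.

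Under \eqref{eq:inv} and \eqref{eq:condJac}, Proposition \ref{prop:Lip} yields $\|\nabla\phi_\vep\|_{L^\infty(\Omega_\vep)}\leq K\|g\|_{W^{1,\infty}(\Omega)}$ uniformly in $\vep$. Hence $\{\phi_\vep\}$ is equi-Lipschitz on every compact subset of $\Omega$, and Ascoli--Arzelà supplies a subsequence converging locally uniformly to some Lipschitz limit $\tilde\phi\in W^{1,\infty}(\Omega)$. Passing to the limit in the weak formulation of \eqref{eq:lella} against test functions in $C_c^\infty(\Omega)$ shows that $\tilde\phi$ is a bounded weak solution of \eqref{hjb2}; the uniqueness statement in Proposition \ref{prop:hjdelta} forces $\tilde\phi=\phi$. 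Lower semicontinuity of the $W^{1,\infty}$-norm under uniform convergence then gives $\phi\in W^{1,\infty}(\Omega)$ with the quantitative estimate $\|\nabla\phi\|_{L^\infty(\Omega)}\leq K\|g\|_{W^{1,\infty}(\Omega)}$.

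For the second assertion, under \eqref{inv_stronger} and \eqref{weak_Jac} Proposition \ref{prop:Lip} only provides the weighted bound $|\nabla\phi_\vep(x)|\leq K_\beta\, d(x)^{-\beta}\|g\|_{W^{1,\infty}(\Omega)}$ for every $\beta>0$, uniformly in $\vep$. Given $a\in(0,1)$, I would choose $\beta$ so small that $\beta<n^{-1}(1-a)$; then $d^{-\beta}\in L^r(\Omega)$ for some $r>n/(1-a)$, so $\{\phi_\vep\}$ is bounded in $W^{1,r}(\Omega)$ (extending by zero outside $\Omega_\vep$ is harmless since the bound is also obtained on $\overline\Omega_{\delta_0}$ by Step 2 of Proposition \ref{prop:Lip}). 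Morrey's embedding $W^{1,r}\hookrightarrow C^{1-n/r}\subset C^a$ then gives equi-$C^a$ bounds on $\phi_\vep$, and Ascoli--Arzelà plus the same uniqueness argument identify the limit with $\phi$, yielding $\phi\in C^a(\Omega)$.

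The main (minor) obstacle is the Hölder case: one has to convert a gradient blowing up like $d^{-\beta}$ near $\partial\Omega$ into global Hölder regularity. The clean way is through $W^{1,r}$ integrability of $\nabla\phi$ for $r$ slightly above $n$ plus Morrey's embedding; freedom to choose $\beta$ arbitrarily small in \eqref{holdest} is exactly what makes this work for every $a<1$.
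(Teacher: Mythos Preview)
Your proof is correct and follows essentially the same route as the paper: you approximate by the Neumann problems \eqref{eq:lella}, invoke Proposition~\ref{prop:Lip} to get uniform (weighted) gradient bounds, and pass to the limit, with the H\"older case handled via $W^{1,r}$ and Morrey exactly as is done inside Step~2 of the proof of Proposition~\ref{prop:Lip}. One small remark: your parenthetical about ``extending by zero outside $\Omega_\vep$'' is not quite right (such an extension would create a jump at $\partial\Omega_\vep$ and destroy $W^{1,r}$ regularity), but it is also unnecessary---the uniform $W^{1,r}(\Omega_\vep)$ bound and the uniform geometry of the $\Omega_\vep$ already give uniform $C^a(\overline{\Omega_\vep})$ estimates, which is all you need to pass to the limit.
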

		Now we focus on the case $\delta=0$. The main difference here is that the problem \eqref{eq:lella} is not defined for every $g$, and we have to make use of Fredholm's alternative, introducing the ergodic constant. 
		
		\begin{prop}\label{prop:delta0}
			Let $g\in W^{1,\infty}(\Omega)$, $b\in W^{1,\infty}_{loc}(\Omega)$ such that \eqref{eq:inv} and \eqref{eq:condJac} (resp. \eqref{inv_stronger} and \eqref{weak_Jac}) are satisfied.  Then there exists a unique
			$\lambda_g\in\R$ such that the problem
			\begin{equation}\label{eq:hjb0}
				-\Delta\phi(x)+b(x)\cdot\nabla\phi(x)=g(x)+\lambda_g
			\end{equation}
			admits a weak solution $\phi\in L^\infty(\Omega)$. Moreover, $\phi$ is unique up to an additive constant, and belongs to  $W^{1,\infty}(\Omega)$ (resp. to $C^a(\Omega)$ for all $0<a<1$ if \eqref{inv_stronger} and \eqref{weak_Jac} hold). 
		\end{prop}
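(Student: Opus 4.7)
The plan is to construct $(\phi,\lambda_g)$ by a vanishing-discount argument based on Proposition \ref{prop:hjdelta} and the uniform estimates of Proposition \ref{prop:Lip}, and then to dispatch uniqueness via Lemma \ref{smp} together with a logarithmic barrier exploiting the invariance condition \eqref{eq:inv}.

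For existence, I would introduce, for each $\delta>0$, the penalized problem $-\Delta\phi_\delta+b\cdot\nabla\phi_\delta+\delta\phi_\delta=g$, whose unique bounded solution $\phi_\delta$ is provided by Proposition \ref{prop:hjdelta}. The maximum principle yields $\|\delta\phi_\delta\|_\infty\le\|g\|_\infty$, while Proposition \ref{prop:Lip} (applied to the natural $\eps$-approximations used in the proof of Proposition \ref{prop:hjdelta}) gives a uniform-in-$\delta$ bound on $\|\nabla\phi_\delta\|_{L^\infty(\Omega)}$ under \eqref{eq:inv}–\eqref{eq:condJac}, or on a $C^a$ seminorm under \eqref{inv_stronger}–\eqref{weak_Jac}. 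Fix $x_0\in\Omega$ and set $\tilde\phi_\delta:=\phi_\delta-\phi_\delta(x_0)$; these renormalized functions are equibounded in $W^{1,\infty}(\Omega)$ (resp.\ in $C^a(\Omega)$), and $-\delta\phi_\delta(x_0)$ is a bounded real sequence. Ascoli–Arzelà together with interior elliptic regularity then produce, along a subsequence, $\tilde\phi_\delta\to\phi$ in $C^1_{loc}(\Omega)$ and $-\delta\phi_\delta(x_0)\to\lambda_g\in\R$. Rewriting the penalized equation as $-\Delta\tilde\phi_\delta+b\cdot\nabla\tilde\phi_\delta+\delta\tilde\phi_\delta=g-\delta\phi_\delta(x_0)$ and passing to the limit (the term $\delta\tilde\phi_\delta$ vanishes locally uniformly) yields a weak solution $\phi$ of \eqref{eq:hjb0} in the sense of Definition \ref{defhjb2}, with the required regularity inherited from the uniform bounds on $\tilde\phi_\delta$.

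Uniqueness of $\phi$ at fixed $\lambda_g$ (up to an additive constant) is then immediate: the difference of two solutions is a bounded weak solution of $-\Delta w+b\cdot\nabla w=0$, and Lemma \ref{smp} forces $w$ to be constant.

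I expect the main obstacle to be the uniqueness of the ergodic constant $\lambda_g$. The plan is to argue by contradiction: if two solutions $(\phi_1,\lambda_1)$ and $(\phi_2,\lambda_2)$ exist with $\lambda_1>\lambda_2$, then $w:=\phi_2-\phi_1$ is a bounded weak solution of $-\Delta w+b\cdot\nabla w=\lambda_2-\lambda_1<0$. I would introduce the barrier $W:=w+\eta\log d(x)$ with $\eta>0$ small. A direct computation using $|\nabla d|=1$ near $\partial\Omega$ together with \eqref{eq:inv} gives $-\Delta(\log d)+b\cdot\nabla(\log d)=\tfrac{1}{d^2}-\tfrac{\Delta d-b\cdot\nabla d}{d}\le C$ in a boundary strip, and the same quantity is plainly bounded on compact subsets of $\Omega$; hence, for $\eta$ sufficiently small, $-\Delta W+b\cdot\nabla W\le(\lambda_2-\lambda_1)/2<0$ throughout $\Omega$. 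On the other hand $W$ is bounded above and $W(x)\to-\infty$ as $x\to\partial\Omega$, so $\sup_\Omega W$ is attained at an interior point $x^\star$; by interior elliptic regularity (since $b$ is $W^{1,\infty}_{loc}$ and $g$ is $W^{1,\infty}$), $w$ is classically $C^2$ near $x^\star$, so $\nabla W(x^\star)=0$ and $\Delta W(x^\star)\le 0$, contradicting the strict negativity just established. This forces $\lambda_1=\lambda_2$ and completes the argument.
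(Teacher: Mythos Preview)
Your proof is correct. For existence you take the classical vanishing-discount route (letting $\delta\to 0$ in Proposition~\ref{prop:hjdelta}), whereas the paper works at $\delta=0$ directly on the approximating domains $\Omega_\eps$ and invokes Fredholm's alternative for the Neumann problem: the adjoint equation on $\Omega_\eps$ has a one-dimensional positive kernel $\rho_\eps$, which produces the ergodic constant $\lambda_\eps=-\int g\,\rho_\eps$ and the bound $|\lambda_\eps|\le\|g\|_\infty$ in one stroke, after which Proposition~\ref{prop:Lip} is applied exactly as you do. Both routes are standard for ergodic problems and equally efficient here; yours has the advantage of not needing the adjoint equation at this stage, the paper's has the advantage of reusing the same $\Omega_\eps$-approximation that is needed later for the Fokker--Planck equation. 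For the uniqueness of $\lambda_g$, your one-shot global barrier $W=w+\eta\log d$ is essentially the paper's argument repackaged: the paper confines the logarithmic barrier to a boundary strip $\Gamma_{\delta_0}$ (where $d$ is genuinely $C^2$) and handles the interior by a separate maximum-principle step, while your global version tacitly requires replacing $d$ by a $C^2$ extension inside $\Omega$---a harmless convention used elsewhere in the paper, but worth stating explicitly since the true distance function is not smooth across the medial axis.
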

		When there is no possibility of mistake, we will write $\lambda$ instead of $\lambda_g$.
		\begin{proof}
			We consider the problem \eqref{eq:lella} with $\delta=0$. It is a well-known fact (see for example \cite{Krylov}, \cite{Droniou} and \cite{GT}) that the adjoint problem
			\begin{equation*}
				\begin{cases}
					-\Delta\rho_\eps(x)-\mathrm{div}\big(\rho_\eps(x) b(x)\big)=0\,,\\
					\big[\nabla\rho_\eps(x)+\rho_\eps(x) b(x)\big]\cdot\nu(x)_{|\partial\Omega_\eps}=0
				\end{cases}
			\end{equation*}
			admits a solution $\rho_\eps\ge0$, unique up to a multiplicative constant. Hence, Fredholm's alternative tells us that there exists a unique $\lambda_\eps$ and a function $\phi_\eps$, unique up to an additive constant, which solves
			\begin{equation*}
				\begin{cases}
					-\Delta\phi_\eps(x)+b(x)\cdot\nabla\phi_\eps(x)=g(x)+\lambda_\eps\,,\quad x\in \Omega_\eps\\
					\nabla\phi_\eps(x)\cdot\nu(x)_{|\partial\Omega_\eps}=0\,.
				\end{cases}
			\end{equation*}
			We fix $x_0\in\Omega$ and we choose $\phi_\eps$ and $\rho_\eps$ with the additional conditions
			$$
			\phi_\eps(x_0)=0\,,\qquad \into \rho_\eps(x)\,dx=1\,.
			$$
			Multiplying the equation of $\rho_\eps$ by $\phi_\eps$ and integrating by parts, we get
			$$
			\into\rho_\eps(x)(g(x)+\lambda_\eps)\,dx=0\,,
			$$
			which immediately implies
			\begin{equation}\label{eq:boundlambda}
				\begin{split}
					|\lambda_\eps|=\left|\into\rho_\eps(x)g(x)\,dx\right|\le\norminf{g}\,.
				\end{split}
			\end{equation}
			Hence, $\{\lambda_\eps\}_\eps$ is uniformly bounded, and, up to a not-relabeled subsequence, there exists $\lambda$ such that $\lambda_\eps\to\lambda$, and $|\lambda|\le\norminf{g}$.
			
			Now we apply Proposition \ref{prop:Lip} with $b_\eps=b$, $\delta_\eps=0$, $g_\eps(x)=g(x)+\lambda_\eps$. This ensures the $L^\infty$ (resp. $L^r$ for all $r$) bound of $\nabla\phi_\eps$, which, together with the condition $\phi_\eps(x_0)=0$, implies the uniform $W^{1,\infty}$ (resp. $W^{1,r}$) bound of $\phi_\eps$. Hence there exists $\phi\in W^{1,\infty}(\Omega)$ (resp. $ W^{1,r}$) such that, up to a not-relabeled subsequence,
			$$
			\phi_\eps\to\phi\quad\mbox{in }C(\Omega)\,,\qquad\nabla\phi_\eps\weak\nabla\phi\quad\mbox{in }L^r(\Omega)\quad\forall\,r\ge1\,.
			$$
			Passing to the limit in the weak formulation of $\phi_\eps$, we obtain that $\phi$ is a weak solution of \eqref{eq:hjb0}, in the sense of Definition \ref{defhjb2}.
			
			Now we prove the uniqueness of $\lambda $, arguing as in Lemma \ref{smp}.  Suppose   that $(\lambda_1,\phi_1)$ and $\lambda_2, \phi_2$ are two different solutions with, say, $\lambda_1>\lambda_2$. Then, for a  sufficiently small $\vep, \de_0>0$, we can see that  the function $\Phi:= \phi_1- \vep \log (d(x) ) +    \sup\limits_{\{d(x)=\de_0\} } (\phi_2-\phi_1)$ satisfies
			$$
			-\Delta\Phi(x)+b(x)\cdot\nabla\Phi (x)> g(x)+\lambda_2 \quad \hbox{for $x\in \Omega\setminus \Omega_{\de_0}$.}
			$$
			Hence by the maximum principle we deduce that  $\phi_2 \leq \Phi$ in $\Omega\setminus \Omega_{\de_0}$, and letting $\vep \to 0$ yields 
			$$
			\sup\limits_{\{d(x)<\de_0\} } (\phi_2-\phi_1) = \sup\limits_{\{d(x)=\de_0\} } (\phi_2-\phi_1)
			\,.
			$$
			Inside $\Omega_{\de_0}$, using $\lambda_1>\lambda_2$ we also have $\sup\limits_{\Omega_{\de_0} } (\phi_2-\phi_1)= \sup\limits_{\{d(x)=\de_0\} } (\phi_2-\phi_1)$. This allows us to conclude that $ (\phi_2-\phi_1)$ has a global maximum attained on $\{d(x)=\de_0\}$, hence it is a local maximum  inside the domain $\Omega$.  But this is impossible by maximum principle, since $\lambda_1 >\lambda_2$ implies $-\Delta(\phi_2-\phi_1)+b(x)\cdot\nabla(\phi_2-\phi_1) <0$. The contradiction shows that $\lambda_1= \lambda_2$. Finally, suppose that $\phi_1,\phi_2$ are two different weak solutions corresponding to $\lambda$; applying Lemma \ref{smp} we deduce that $\phi_1-\phi_2$ is constant in $\Omega$.

		\end{proof}
		\section{The Fokker-Planck equation}
		
		Now we focus on the study of the following Fokker-Planck equation:
		\begin{equation}\label{eq:fp}
			-\Delta m -\mathrm{div}(m\,b(x))=0\,,\qquad x\in\Omega\,.
		\end{equation}
		
		where $b:\Omega\to\R$ is a locally bounded vector field.
		
		As in the previous section,  \emph{no boundary condition is prescribed} for the following equation. In place of that, we require the invariance condition \eqref{eq:inv}.

		We start by giving a suitable definition  of solution for the problem \eqref{eq:fp}. We denote by $C_b(\Omega)$ the space of continuous bounded functions in $\Omega$.
		
		\begin{defn}\label{def:fp}
			We say that $m\in\mathcal{P}(\Omega)$ is a weak solution of  the equation \eqref{eq:fp} if, for all $\phi\in L^\infty(\Omega)$ satisfying
			\begin{equation}\label{eq:testfp}
				-\Delta\phi+b\cdot\nabla\phi\in C_b(\Omega)
			\end{equation}
			in the sense of Definition \ref{defhjb2}, the following equality holds:
			\begin{equation}\label{eq:fpformula}
				\into \Big(-\Delta\phi(x)+b(x)\cdot\nabla\phi(x)\Big)\,dm(x)=0\,.
			\end{equation}
		\end{defn}
		
		Now we provide with   existence and uniqueness of solutions in the above sense.
		\begin{thm}\label{thm:exm}
			Suppose that $b\in W^{1,\infty}_{loc}(\Omega;\R^n)$ and satisfies \eqref{eq:inv} and \eqref{eq:condJac}, or \eqref{inv_stronger} and \eqref{weak_Jac}. Then there exists a unique probability measure $m\in\mathcal{P}(\Omega)$, absolutely continuous with $L^1$ density, which satisfies the equation \eqref{eq:fp}.
		\end{thm}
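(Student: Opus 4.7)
The plan is to construct $m$ as a limit of approximating probability densities on $\Omega_\eps$ and to characterize it through duality with the linearized HJB problem of Section 3. For each $\eps > 0$, by the Fredholm alternative argument used in the proof of Proposition \ref{prop:delta0}, there is a unique nonnegative $m_\eps \in W^{1,r}(\Omega_\eps)$ satisfying the approximate Fokker--Planck equation
\begin{equation*}
-\Delta m_\eps - \mathrm{div}(m_\eps\,b) = 0 \ \text{in}\ \Omega_\eps, \qquad (\nabla m_\eps + m_\eps\,b)\cdot \nu = 0 \ \text{on}\ \partial\Omega_\eps,
\end{equation*}
normalized by $\int_{\Omega_\eps} m_\eps = 1$. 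Extending each $m_\eps$ by zero to $\overline\Omega$, I regard the family as a sequence in $\mathcal{P}(\overline\Omega)$, and weak-$*$ compactness extracts a limit $\mu \in \mathcal{P}(\overline\Omega)$ along a subsequence.

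The core of the argument is to show that $\mu$ has $L^1$ density concentrated in $\Omega$. Interior regularity is immediate: the operator $-\Delta - \mathrm{div}(\,\cdot\, b)$ is uniformly elliptic with bounded coefficients on every compact $K\subset\Omega$, so De Giorgi--Nash--Moser yields uniform $L^\infty_{\mathrm{loc}}(\Omega)$ and $C^\alpha_{\mathrm{loc}}(\Omega)$ bounds for $m_\eps$, and a diagonal subsequence produces $m\in C^\alpha_{\mathrm{loc}}(\Omega)$ with $m_\eps\to m$ weakly-$*$ in $L^\infty(K)$ for every such $K$. The hard step is tightness, i.e.\ ruling out mass leakage onto $\partial\Omega$. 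Here the invariance condition is essential: a direct computation gives
\begin{equation*}
-\Delta(-\log d) + b\cdot\nabla(-\log d) = \tfrac{1}{d}\big(\Delta d - b\cdot\nabla d\big) - \tfrac{1}{d^2},
\end{equation*}
which under \eqref{inv_stronger} is bounded below by a positive multiple of $d^{-2}$ near $\partial\Omega$, and under the weaker \eqref{eq:inv} is at least bounded from below globally. Pairing $m_\eps$ with a suitable admissible regularization of $-\log d$ (compatible with the Neumann data on $\partial\Omega_\eps$) and integrating by parts then produces a uniform bound of the form $\sup_\eps \int_\Omega |\log d(x)|\, m_\eps\,dx < \infty$, which yields tightness and $m(\Omega) = 1$.

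To verify Definition \ref{def:fp} for $m$, I take an admissible test function $\phi$ with $g := -\Delta\phi + b\cdot\nabla\phi \in C_b(\Omega)$, approximate $g$ by $g_k\in W^{1,\infty}(\Omega)$, and apply Proposition \ref{prop:delta0} (in its Neumann-approximated form on $\Omega_\eps$) to obtain $\phi_{k,\eps}$ and $\lambda_{k,\eps}$ with $-\Delta\phi_{k,\eps} + b\cdot\nabla\phi_{k,\eps} = g_k + \lambda_{k,\eps}$. Testing the equation for $m_\eps$ against $\phi_{k,\eps}$, all boundary terms collapse because the two Neumann conditions are mutually adjoint, and we are left with $\int_{\Omega_\eps}(g_k + \lambda_{k,\eps})\,m_\eps\,dx = 0$. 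Passing $\eps\to 0$ (using $\phi_{k,\eps}\to \phi_k$ in $C^1_{\mathrm{loc}}$ from the proof of Proposition \ref{prop:delta0}, $\lambda_{k,\eps}\to\lambda_k$, and the weak convergence of $m_\eps$) and then $k\to\infty$ gives $\int_\Omega g\,dm = 0$.

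Uniqueness is then immediate by duality: if $m_1,m_2$ both satisfy Definition \ref{def:fp} and $g\in W^{1,\infty}(\Omega)$ is arbitrary, Proposition \ref{prop:delta0} provides an admissible $\phi$ with $-\Delta\phi + b\cdot\nabla\phi = g + \lambda_g$, so $\int_\Omega(g+\lambda_g)\,dm_i = 0$ for $i=1,2$, forcing $\int g\,dm_i = -\lambda_g$ independently of $i$. Since $W^{1,\infty}(\Omega)$ separates probability measures on $\Omega$, this gives $m_1 = m_2$. The main obstacle is clearly the tightness step: the natural Lyapunov function $-\log d$ is not itself admissible, and one must craft a careful regularization that extracts a quantitative gain from \eqref{eq:inv}--\eqref{inv_stronger} in order to control the mass of $m_\eps$ near $\partial\Omega$ uniformly in $\eps$.
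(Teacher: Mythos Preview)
Your overall architecture---Neumann approximation on $\Omega_\eps$, local compactness, tightness, and duality for both verification and uniqueness---matches the paper's, and your uniqueness argument is identical. However, there is a genuine gap in the tightness step, and a smaller issue in the verification step.

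\textbf{Tightness.} Under assumption \eqref{eq:inv} alone, your computation gives only
\[
-\Delta(-\log d)+b\cdot\nabla(-\log d)=\tfrac{1}{d}\big(\Delta d - b\cdot\nabla d\big)-\tfrac{1}{d^2}\ge -C
\]
near the boundary, not a coercive lower bound. Testing $m_\eps$ against (a regularization of) $-\log d$ and integrating by parts yields
\[
\int_{\Omega_\eps} m_\eps\Big(-\Delta(-\log d)+b\cdot\nabla(-\log d)\Big)\,dx=-\tfrac{1}{\eps}\int_{\partial\Omega_\eps} m_\eps\,d\sigma\le 0,
\]
and combining with the lower bound $\ge -C$ produces nothing; in particular it does \emph{not} give $\sup_\eps\int|\log d|\,m_\eps<\infty$. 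Your Lyapunov choice works under \eqref{inv_stronger} (where you do get a positive $d^{-2}$ lower bound), but the theorem must also cover the pair \eqref{eq:inv}--\eqref{eq:condJac}. The paper resolves this by testing $m_\eps$ against $d(x)$ itself: the boundary term has the good sign, giving $\int_{\Omega_\eps} m_\eps(\Delta d - b\cdot\nabla d)\ge 0$; splitting into $\Omega_\delta$ and $\Gamma_\delta$ and using that $\Delta d - b\cdot\nabla d\to +\infty$ at $\partial\Omega$ (a direct consequence of \eqref{eq:inv}) gives $\int_{\Gamma_\eta} m_\eps\le C_\delta/N$ whenever $\Delta d - b\cdot\nabla d\ge N$ on $\Gamma_\eta$. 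No regularization is needed, and this also upgrades the convergence $m_\eps\to m$ to strong $L^1$.

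\textbf{Verification.} Your route via the ergodic problem carries an unwanted constant $\lambda_k$: after passing $\eps\to 0$ you obtain $\int(g_k+\lambda_k)\,dm=0$, and to reach $\int g\,dm=0$ you would need $\lambda_k\to 0$. That requires continuity of $g\mapsto\lambda_g$ in the $C_b$-norm, which is not provided (Proposition~\ref{prop:delta0} only constructs $\lambda_g$ for $g\in W^{1,\infty}$). The paper sidesteps this entirely by using the $\delta=1$ problem of Proposition~\ref{prop:hjdelta}: one solves $-\Delta\phi_\eps+b\cdot\nabla\phi_\eps+\phi_\eps=-\Delta\phi+b\cdot\nabla\phi+\phi$ on $\Omega_\eps$ with Neumann data, so no ergodic constant appears, and passes to the limit using $\phi_\eps\weak^*\phi$ in $L^\infty$ against the strong $L^1$ convergence of $m_\eps$.
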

		
		With an abuse of notation, we will call $m$ both the probability measure and its $L^1$ density.
		
		\begin{proof}
			Let $\eps>0$. Given, as before,  $\Omega_{\eps}:=\{d(x)>\eps\}\subset\Omega$, we define $m_\eps$ as the unique solution of the following elliptic PDE:
			\begin{equation}\label{eq:m_epsapprox}
				\begin{cases}
					-\Delta m_\eps-\mathrm{div}(m_\eps\,b(x))=0\,,\hspace{0.2cm}\qquad x\in\Omega_\eps\,,\\
					\left[\nabla m_\eps+m_\eps\,b(x)\right]\cdot\nu(x)=0\,,\qquad x\in\partial\Omega_\eps\,,\\
					m_\eps\ge0\,,\qquad \mathlarger{\into} m_\eps(x)=1\,.
				\end{cases}
			\end{equation}
			We can eventually extend  $m_\eps$ to the whole $\Omega$ by putting $m_\eps(x)=0$ for $x\in\Omega\setminus\Omega_\eps\,.$
			Since $b$ is locally bounded, according to standard estimates (e.g. see \emph{Corollary 1.6.4} in  \cite{Krylov}),   for all compact subsets $K\subset\subset\Omega$ we have
			$$
			\int_{\Omega_\eps} \big(m_\eps^\alpha+|\nabla m_\eps|^\alpha\big)\,dx\le C_K\,,
			$$
			for some $\alpha>1$ and for $C_K$ not depending on $\eps$. This implies that $\exists\,m\ge0$, $m\in W^{1,\alpha}_{loc}(\Omega)$, such that, up to subsequences,
			\begin{equation*}
				m_\eps\to m\qquad \hbox{$a.e.$ and in }L^\alpha_{loc}(\Omega)\,,\qquad\qquad m_\eps\weak m\qquad\mbox{in }W^{1,\alpha}_{loc}(\Omega)\,.
			\end{equation*}
			It is straightforward that $m\ge0\,\, a.e.$.
			We now want to prove that there is no dissipation of mass, and actually $m_\eps\to m$ strongly in $L^1$. Indeed, since we already know the a.e. convergence, we just have to prove that
			\begin{equation*}
				\into m_\eps(x)\, dx\to \into m(x)\, dx\,.
			\end{equation*}
			Being $m_\eps\in \cP(\Omega)$, this reduces to show that $\into m(x)\, dx=1$, and we only need to prove that $\into m(x)\, dx\ge 1$, 
			since, by Fatou's lemma, we already have $\into m(x)\,dx\le 1\,$.\\
			
			In order to do that, we use as test function in \eqref{eq:m_epsapprox} a $\mathcal{C}^2$ approximation of the oriented distance $d(\cdot)$, coinciding with $d(\cdot)$ in a neighborhood of the boundary. To avoid too heavy notation, we call this approximation $d(\cdot)$ as well. We obtain, for $\eps$ sufficiently small,
			\begin{equation}\label{eq:partintm}
				\int_{\Omega_\eps} m_\eps(-\Delta d(x)+b(x)\cdot\nabla d(x))\,dx=-\int_{\partial\Omega_\eps}m_\eps\nabla d(x)\cdot\nu_\eps(x)\,dx\ge0\,,
			\end{equation}
			where $\nu_\eps(x)$ is the outward normal at $\partial\Omega_\eps$ and the inequality in the right-hand side comes from the fact that $\nu_\eps(x)=\nu(x) =-\nabla d(x)$  for $\eps$ sufficiently small.  
			
			Let $\delta>0\,$, which will be chosen later. From \eqref{eq:partintm} we get, splitting the integral in the left-hand side in $\Omega_\delta$ and $\Omega_{\eps}\setminus\Omega_\delta$, and recalling that $m_\eps=0$ in $\Omega\setminus\Omega_\eps$,
			\begin{equation}\label{eq:intme}
				\int_{\Omega\setminus\Omega_\delta} m_\eps \big(\Delta d(x)-b(x)\cdot\nabla d(x)\big)\,dx\le \int_{\Omega_\delta} m_\eps \big(-\Delta d(x)+b(x)\cdot\nabla d(x)\big)\le C_\delta\,,
			\end{equation}
			where the last bound is because $b\in L^\infty_{loc}(\Omega;\R^n)$ and $m_\eps$ has unit mass.  
			
			Thanks to \rife{eq:inv}, $\lim\limits_{x\to\partial\Omega}\big(\Delta d(x)-b(x)\cdot\nabla d(x)\big)=+\infty\,$. Hence, we can choose $\delta$ such that  $\Delta d(x)-b(x)\cdot\nabla d(x)\ge0$ in $\Omega\setminus\Omega_\delta$.
			
			Now, let $0<\eta<\delta\,.$ Since $\Omega\setminus\Omega_\eta\subseteq\Omega\setminus\Omega_\delta$, and the quantity in the integral on the left-hand side of \eqref{eq:intme} is non-negative, we get
			\begin{equation*}
				\int_{\Omega\setminus\Omega_\eta} m_\eps \big(\Delta d(x)-b(x)\cdot\nabla d(x)\big)\,dx\le C_\delta\,.
			\end{equation*}
			For each $N>0$, we choose $\eta=\eta_N$ such that
			$$
			\Delta d(x)-b(x)\cdot\nabla d(x)\ge N\qquad\quad\forall\, x\in\Omega\setminus\Omega_\eta\,.
			$$
			Hence we obtain
			\begin{equation}\label{mass-eta}
				N \int_{\Omega\setminus\Omega_\eta} m_\eps\,dx\le C_\delta
			\end{equation}
			which yields
			\begin{equation}\label{mass-eta}
				\int_{\Omega_\eta}m_\eps\,dx\ge1-\frac{C_{\delta}}{N}\,.
			\end{equation}
			Since the last estimate, for fixed $N$, is uniform in $\eps$ and is referred to the compact set $\Omega_\eta$, we can use the local convergence of $m_\eps\to m$ and obtain
			\begin{align*}
				\into m(x)\,dx\ge\int_{\Omega_\eta}m(x)\,dx=\lim\limits_{\eps\to0}\int_{\Omega_\eta}m_\eps\,dx\ge1-\frac{C_{\delta}}{N}\,.
			\end{align*}
			Eventually, passing to the limit for $N\to+\infty$, we obtain
			$$
			\into m(x)\,dx\ge 1\,.
			$$
			Hence we conclude that 
			$$
			\into m(x)\,dx=1\,,
			$$
			as well as  the strong convergence  $m_\eps\to m$ in $L^1$.
			
			To prove equation \eqref{eq:fpformula}, we take $\phi\in L^{\infty}(\Omega)$ solving \eqref{eq:testfp} and we consider the sequence $\{\phi_\eps\}\subseteq L^\infty(\Omega)$ such that $\phi_\eps$ solves in $\Omega_\eps$
			$$
			\begin{cases}
				-\Delta\phi_\eps+b\cdot\nabla\phi_\eps+\phi_\eps=-\Delta\phi+b\cdot\nabla\phi+\phi\,,\\
				\nabla\phi_\eps\cdot\nu(x)_{|\partial\Omega_\eps}=0\,.
			\end{cases}
			$$
			Thanks to Proposition \ref{prop:hjdelta} we know that $\phi_\eps\weak^*\phi$, with a $L^\infty$-weak$^*$ convergence. 
			
			We use $\phi_\eps$ as test function for $m_\eps$, obtaining
			$$
			\int_{\Omega_\eps} m_\eps(x)\big(-\Delta \phi(x)+b(x)\cdot\nabla \phi(x)\big)\,dx=\int_{\Omega_\eps}m_\eps(x)\big(\phi_\eps(x)-\phi(x)\big)\,dx\,.
			$$
			Since $-\Delta\phi+b\cdot\nabla\phi\in L^\infty$, while $m_\eps\to m$ in $L^1$, we can pass to the limit on the left-hand side. Similarly we have for the right-hand side, using that $\phi_\eps\weak^*\phi$ in $L^\infty$. This allows us  to pass to the limit and conclude the existence part.\\
			
			As far as uniqueness is concerned, let us consider $m_1$ and $m_2$ two solutions of \eqref{eq:fp}, and take $f\in W^{1,\infty}(\Omega)$. According to Proposition \ref{prop:delta0}, there exist $\lambda_f\in\R$, $\phi\in W^{1,r}(\Omega)$, $1\le r<+\infty$, such that
			$$
			-\Delta\phi+b\cdot\nabla\phi=f+\lambda_f\,,
			$$
			in the sense of Definition \ref{defhjb2}. Using $\phi$ as test function in \eqref{eq:fpformula}, we get
			$$
			\into (f(x)+\lambda_f)dm_1(x)=\into (f(x)+\lambda_f)dm_2(x)=0
			$$
			which implies
			$$
			\into f(x)dm_1(x)=\into f(x)dm_2(x)\,.
			$$
			The arbitrariness of $f\in W^{1,\infty}(\Omega)$ allows us to conclude that $m_1=m_2$.
		\end{proof}

		We note that, in the uniqueness proof, we  used the fact that $m_1,m_2\in\mathcal{P}(\Omega)$, to have $\into\lambda_f dm_i(x)=\lambda_f$, $i=1,2$. Without this prescription, we would lose the uniqueness because, if $m $ solves \eqref{eq:fp}, then $km $ solves the same equation for all $k\in\R$.

		\subsection{Regularity of $m$.}
		Now we prove refined estimates on the boundary behavior of $m$, and then more regularity of the solution, under extra assumptions on the drift $b$ near the boundary. 
		
		We start by recalling  the following classical version  of the strong maximum principle, which includes Hopf boundary lemma  (see e.g. \cite{PW}).
		
		\begin{prop}\label{strongmp} Let $\Omega$ be a bounded set satisfying the interior sphere condition at the boundary.  Given $b, a_{ij}, c(x)$  bounded (and sufficiently smooth), and such that $A(x)=(a_{ij})$ is uniformly elliptic, let us define
			$$
			L(u):= -\sum_{ij} a_{ij} u_{x_ix_j} + b(x) \cdot \nabla u +c(x) u\,.
			$$
			If $u\in C^2(\Omega)\cap C^1(\overline \Omega)$ satisfies $L(u) \geq 0$ in $\Omega$, and $u\geq 0$ in $\overline \Omega$, then either $u>0$ in $\Omega$ or $u\equiv 0$. Moreover, if   $u(x_0)=0$ for some $x_0\in \partial \Omega$, then $\frac{\partial u}{\partial \nu}(x_0)<0$.
		\end{prop}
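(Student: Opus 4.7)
The plan follows the classical two-step Hopf strategy: a tailored barrier function on a spherical annulus, combined with the weak maximum principle, plus the interior sphere condition for the boundary statement. As a preliminary reduction, since $u \geq 0$, I replace $c$ by $c^+ := \max(c,0)$: the identity $-\sum_{ij} a_{ij} u_{x_i x_j} + b \cdot \nabla u + c^+ u = L(u) + (c^+ - c)u \geq 0$ shows $u$ remains a nonnegative supersolution for the modified operator, so I may assume $c \geq 0$ throughout. This cleanly removes the only delicate sign issue in the weak maximum principle used below.

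The core construction is the Hopf barrier. For a ball $B(x_c, R) \subset \overline{\Omega}$ and parameter $\alpha > 0$, set
$$
w(x) = e^{-\alpha|x-x_c|^2} - e^{-\alpha R^2}\,,
$$
so that $w > 0$ in $B(x_c, R)$, $w = 0$ on $\partial B(x_c, R)$, and the outward radial derivative there equals $-2\alpha R e^{-\alpha R^2} < 0$. A direct computation yields
$$
L(w) = e^{-\alpha|x-x_c|^2}\Big[2\alpha\,\mathrm{tr}(A) - 4\alpha^2 \langle A(x-x_c), x-x_c\rangle - 2\alpha\, b\cdot(x-x_c)\Big] + cw\,.
$$
On the annulus $A_R := B(x_c, R)\setminus\overline{B(x_c, R/2)}$ uniform ellipticity gives $\langle A(x-x_c), x-x_c\rangle \geq \lambda R^2/4$, while $cw \leq \|c\|_\infty e^{-\alpha|x-x_c|^2}$; thus the $-\alpha^2$ term dominates for $\alpha$ large enough, forcing $L(w) \leq 0$ on $A_R$. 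Calibrating $\alpha$ in terms of $\lambda$, $\|b\|_\infty$, $\|c\|_\infty$ and $R$ is the one step where care is needed, and is essentially the heart of the proof.

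With the barrier in hand, both conclusions follow by the same template. For the strong maximum principle I argue by contradiction: suppose $u \not\equiv 0$ but $u$ vanishes somewhere in $\Omega$; then $\Omega^+ := \{u > 0\}$ has a boundary point inside $\Omega$, and a standard geometric argument yields a ball $B(x_1, R) \subset \Omega^+$ whose closure touches $\{u=0\}$ at some $y_1 \in \partial B(x_1, R) \cap \Omega$. By compactness, $u \geq \eta > 0$ on $\partial B(x_1, R/2)$, so for $\varepsilon > 0$ small enough $u \geq \varepsilon w$ on $\partial A_R$; combined with $L(u - \varepsilon w) \geq 0$ in $A_R$ and $c \geq 0$, the weak maximum principle yields $u \geq \varepsilon w$ throughout $A_R$. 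Since $u(y_1) = w(y_1) = 0$, this forces $\partial_\nu u(y_1) \leq \varepsilon \partial_\nu w(y_1) < 0$, contradicting the fact that $y_1 \in \Omega$ is an interior minimum where $\nabla u = 0$. For the Hopf inequality, when $u(x_0) = 0$ at $x_0 \in \partial\Omega$ with $u > 0$ in $\Omega$, the interior sphere condition provides a ball $B(x_c, R) \subset \Omega$ tangent to $\partial \Omega$ at $x_0$, with outward radial direction at $x_0$ coinciding with $\nu(x_0)$; the same annular argument with $x_0$ on the outer sphere gives $u \geq \varepsilon w$ in $A_R$, whence $\partial_\nu u(x_0) \leq \varepsilon \partial_\nu w(x_0) = -2\alpha\varepsilon R e^{-\alpha R^2} < 0$.
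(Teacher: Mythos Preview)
Your argument is correct and is precisely the classical Hopf barrier proof; the paper does not supply its own proof of this proposition but simply recalls it as a standard result, citing Protter--Weinberger \cite{PW}, which contains exactly the argument you wrote. There is nothing to compare: your proof \emph{is} the textbook proof the paper defers to.
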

		
		We can now show the following estimates on the boundary behavior of $m$.
		
		\begin{thm} Assume that the hypotheses of Theorem \ref{thm:exm} hold true, and in addition that $\exists\gamma>1,\delta_0,\theta>0$ such that $\forall\, x\in\Gamma_{\delta_0}$ it holds
			\begin{equation}\label{hp:mraff}
				\begin{split}
					b(x)=\frac{\gamma}{d(x)}\Big[\nu(x)+O\big(d(x)^\theta\big)\Big]\,,\qquad\mathrm{div}(b(x))=\frac{\gamma}{d(x)^2}\Big[1+O\big(d(x)^\theta\big)\Big]\,.
				\end{split}
			\end{equation}
			Let $m$ be the unique weak solution of \rife{eq:fp}. Then $m\in C^{1,\alpha}(\overline\Omega)$, for some $\alpha>0$,  and there exist $C_1,C_2>0$ such that
			\begin{equation}\label{eq:regm}
				C_1\,d(x)^{\gamma}\le m(x)\le C_2\, d(x)^{\gamma}\,.
			\end{equation}
		\end{thm}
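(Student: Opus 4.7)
The plan is to exploit the same leading-order cancellation that makes the ansatz $d(x)^\gamma$ an approximate solution of the Fokker-Planck operator near $\partial\Omega$: first to build barriers and obtain the sharp two-sided bound \eqref{eq:regm}, then to perform a change of variable which converts the singular equation into a regular one.

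\emph{Step 1 (barriers).} Let $w(x)=d(x)^\gamma$ in a boundary neighborhood $\Gamma_{\delta_0}$ where $d\in C^2$. Using \eqref{hp:mraff} and $|\nabla d|=1$, direct computation gives
\[
-\Delta w=-\gamma(\gamma-1)d^{\gamma-2}+O(d^{\gamma-1}),\qquad b\cdot\nabla w=-\gamma^2 d^{\gamma-2}+O(d^{\gamma+\theta-2}),
\]
and $w\,\mathrm{div}(b)=\gamma d^{\gamma-2}+O(d^{\gamma+\theta-2})$, so that the $d^{\gamma-2}$ terms cancel exactly in $-\Delta w-\mathrm{div}(wb)$ and only a remainder of order $d^{\gamma+\min\{\theta,1\}-2}$ survives. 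Setting $w_\pm(x)=A(1\pm B d^\eta)d^\gamma$ with $0<\eta<\min\{\theta,1\}$ and $B$ large, a further computation of the same type shows that $w_+$ (resp.\ $w_-$) is a supersolution (resp.\ subsolution) of the Fokker-Planck operator in $\Gamma_{\delta_0}$, after shrinking $\delta_0$.

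\emph{Step 2 (pointwise bounds).} Apply the comparison principle to the approximations $m_\eps$ from \eqref{eq:m_epsapprox}, in the domain $\Gamma_{\delta_0}\cap\Omega_\eps$. Interior regularity of $m_\eps$ (via the Krylov-type estimates already used in the proof of Theorem \ref{thm:exm}) together with $\int_\Omega m_\eps=1$ gives an $\eps$-uniform bound $m_\eps\leq K_0$ on $\{d=\delta_0\}$, so by choosing $A$ large we have $m_\eps\leq w_+$ on $\{d=\delta_0\}$. A computation analogous to Step 1 shows that $w_+$ satisfies, to leading order, the same no-flux condition as $m_\eps$ at $\partial\Omega_\eps$, which is what is needed to rule out a boundary maximum of $m_\eps-w_+$ there; passing to the limit $\eps\to 0$ yields the upper bound $m\leq C_2 d^\gamma$. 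For the lower bound, the strong maximum principle from Proposition \ref{strongmp}, applied to $m$ in $\Omega_{\delta_0/2}$ (where coefficients are regular and $m\geq 0$ is not identically zero), gives $m\geq c_0>0$ on $\{d=\delta_0\}$; comparison with $w_-$ (for $A$ small) then yields the matching lower bound in \eqref{eq:regm}.

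\emph{Step 3 ($C^{1,\alpha}$ regularity up to $\partial\Omega$).} Set $v:=m/d^\gamma$; by Step 2, $v$ is bounded above and below away from zero in $\Gamma_{\delta_0}$. The same cancellation as in Step 1 shows that $v$ satisfies an elliptic equation
\[
-\Delta v+B_1(x)\cdot\nabla v+C_1(x)\,v=0,\qquad B_1=-\tfrac{\gamma}{d}\nabla d+O(d^{\theta-1}),\;\;C_1=-\tfrac{\gamma\Delta d}{d}+O(d^{\theta-2}).
\]
Flattening the boundary near an arbitrary point of $\partial\Omega$ and rescaling $\xi=y/\delta$ as in the proof of Proposition \ref{prop:21}, the rescaled problem converges to a uniformly elliptic equation with regular coefficients on a half-space. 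Standard boundary Schauder and De Giorgi--Nash--Moser estimates on the limit, together with a $C^\alpha$-to-$C^{1,\alpha}$ bootstrap, can then be transferred back to $\Omega$, yielding $v\in C^{1,\alpha}(\overline{\Omega}\cap \overline{\Gamma_{\delta_0}})$ and therefore $m=v\,d^\gamma\in C^{1,\alpha}(\overline{\Omega})$. The main technical difficulty lies precisely in this step: although the leading $1/d^2$ singularities cancel, the remainders $O(d^{\theta-1})$ in $B_1$ and $O(d^{\theta-2})$ in $C_1$ remain singular at $\partial\Omega$, so classical boundary Schauder theory does not apply directly, and the scaling/flattening strategy inherited from Section 3 is what allows one to reach a regular limit problem and close the bootstrap.
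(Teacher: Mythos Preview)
Your Steps 1--2 are essentially the paper's approach: the barriers $d^\gamma\pm d^{\gamma+\sigma}$ (your $w_\pm$) are exactly what the paper uses, and the comparison is carried out on the Neumann approximations $m_\eps$. Two points deserve tightening. First, ``$w_+$ satisfies to leading order the same no-flux condition'' is not quite the right statement; what is needed (and what the paper computes) is that at a boundary touching point $x_0\in\partial\Omega_\eps$ one has $\partial_\nu(\Lambda w_+-m_\eps)(x_0)>0$, which contradicts Hopf's lemma. Second, your lower-bound argument applies the strong maximum principle to the limit $m$ and then wants to compare $m$ with $w_-$ in $\Gamma_{\delta_0}$, but $m$ carries no usable boundary condition at $\partial\Omega$; you must either run the touching-point argument on $m_\eps$ (as the paper does, using Harnack plus the mass estimate \eqref{mass-eta} to get a uniform lower bound on $\{d=\delta_0\}$) or explain why local uniform convergence $m_\eps\to m$ transfers $m\ge c_0$ on $\{d=\delta_0\}$ back to $m_\eps$.

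The real divergence is Step 3, where you overwork the problem. Once the upper bound $m\le C_2 d^\gamma$ is in hand, the vector field $bm$ is \emph{bounded} on all of $\Omega$: $|b|\le C d^{-1}$ and $m\le C d^\gamma$ with $\gamma>1$ give $|bm|\le C d^{\gamma-1}\le C$. Thus $m$ solves $-\Delta m=\mathrm{div}(f)$ with $f=bm\in L^\infty(\Omega)$, and standard boundary regularity for divergence-form equations (e.g.\ Lieberman) yields $m\in C^{1,\alpha}(\overline\Omega)$ directly. Your quotient $v=m/d^\gamma$ satisfies an equation whose lower-order coefficients are still singular (as you note), and the blow-up/flattening machinery of Proposition~\ref{prop:21} is not needed here; the crucial gain from $\gamma>1$ is precisely that the singular drift becomes a bounded source once multiplied by $m$.
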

		\begin{proof}
			We consider, for $\eps>0$, the solution $m_\eps$ of problem \rife{eq:m_epsapprox}. Since $b$ is smooth in $\Omega_\eps$, we can apply Proposition \ref{strongmp} to infer that $m_\eps>0$ in $\Omega_\eps$; moreover, since $\frac{\partial m_\eps}{\partial \nu}= -m_\eps b\cdot \nu $, by Hopf lemma (i.e. the boundary condition of Proposition \ref{strongmp}) we have that $m_\eps$ cannot vanish at the boundary. Hence, we have $m_\eps >0$ in $\overline \Omega_\eps$.  Now we consider the function $\phi= d(x)^{\gamma}-d(x)^{\gamma+\sigma}$ in the domain $\Omega\setminus \Omega_{\de_0}$, for $\de_0$ and $\sigma$ sufficiently small. Taking into account \eqref{hp:mraff}, a straightforward computation gives, for $\sigma<\theta$,
			$$
			-\Delta\phi- b(x)\cdot\nabla\phi- \phi\, \dive(b(x))=  d(x)^{ \gamma+\sigma-2}\Big((\gamma-1+\sigma)\sigma+o(1)\Big)\,,
			$$
			hence $\phi$ is a supersolution in $\Omega\setminus \Omega_{\de_0}$, for $\de_0$ sufficiently small. We define
			$$
			\Lambda:= \inf\{\lambda >0\,:\, \lambda \phi > m_\eps \quad \hbox{in $\Omega_\eps\setminus \Omega_{\de_0}$}\}.
			$$
			We clearly have $\Lambda <\infty$, because $m_\eps$ is bounded, and $\Lambda>0$ since $m_\eps$ is positive. Moreover, by continuity and definition of $\Lambda$,  there exists some point $x_0\in \overline{\Omega_\eps\setminus \Omega_{\de_0}}$ such that $m_\eps(x_0)= \Lambda \phi(x_0)$.
			Notice that, if $L(m):= -\Delta m- b\cdot \nabla m-m\,\dive(b)$, we have
			$$
			\Lambda \phi -m_\eps\geq 0\,,\qquad L(\Lambda \phi -m_\eps) >0 \quad \hbox{in $\Omega_\eps\setminus \Omega_{\de_0}$} 
			$$
			so $x_0 \not\in \Omega_\eps\setminus \Omega_{\de_0}$, because $\Lambda \phi -m_\eps>0$ in the interior due to Proposition \ref{strongmp}. In addition, at $\partial \Omega_\eps$ we have
			\begin{align*}
				\frac{\partial(\Lambda \phi -m_\eps)}{\partial \nu} & =   m_\eps b\cdot \nu - \Lambda \big(\gamma d(x)^{\gamma-1}-(\gamma+\sigma)d(x)^{\gamma+\sigma-1}\big)
				\\
				& =   (m_\eps- \Lambda \phi)  b\cdot \nu + \Lambda d(x)^{\gamma+\sigma-1} (\sigma+ o(1)) \,
			\end{align*}
			where we used \rife{hp:mraff} and $\theta>\sigma$. Hence,  $m_\eps(x_0)= \Lambda \phi(x_0)$ would imply  $\frac{\partial(\Lambda \phi -m_\eps)}{\partial \nu}>0$ at $x_0$. This excludes that $x_0\in \partial \Omega_\eps$. We deduce that $x_0\in \{d(x)= \delta_0\}$, which implies 
			$$
			\Lambda =  \de_0^{-\gamma}(1-\de_0^\sigma)^{-1}m_\eps(x_0) \leq   \de_0^{-\gamma}(1-\de_0^\sigma)^{-1} \sup_{\{d(x)= \delta_0\}} m_\eps\,.
			$$
			We proved so far that 
			$$
			m_\eps (x) \leq  \frac{d(x)^{\gamma}-d(x)^{\gamma+\sigma}}{\de_0^{\gamma}(1-\de_0^\sigma)} \sup_{\{d(x)= \delta_0\}} m_\eps\le C_{\delta_0}d(x)^\gamma\sup_{\{d(x)= \delta_0\}} m_\eps\qquad \forall x\in \Omega_\eps\setminus \Omega_{\de_0}\,.
			$$ 
			Recalling that $m_\eps$ has unit mass, and that $b$ is smooth inside, by elliptic regularity there exists a constant $k_0$ such that 
			$$
			m_\eps(x) \leq k_0  \int_{\Omega_\eps} m_\eps(y)\,dy = k_0 \qquad \forall x\,:\, d(x) >\frac{\de_0}2\,.
			$$
			We conclude that, for some constant $C_2$, independent of $\eps$, it holds
			\begin{equation}\label{eq:miserveperunicita}
				m_\eps(x) \leq C_2 \, d(x)^{\gamma} \,.
			\end{equation}
			Similarly, we prove the estimate from below. In this case we use $\phi= d(x)^{\gamma}+d(x)^{\gamma+\sigma}$ and we observe that $\phi$ is a subsolution in a suitable neighborhood of the boundary. Then we define
			$$
			\Theta:= \sup\{\lambda >0\,:\, \lambda \phi < m_\eps \quad \hbox{in $\Omega_\eps\setminus \Omega_{\de_0}$}\}
			$$ 
			and we observe that $ \Theta>0$ because $m_\eps$ is strictly positive in $\Omega_\eps$. As before, we use that 
			$$
			m_\eps- \Theta \phi\geq 0 \,,\qquad L(m_\eps- \Theta\phi)>0 \quad\hbox{in $\Omega_\eps\setminus \Omega_{\de_0}$}\,, 
			$$
			and
			$$
			\frac{\partial( m_\eps- \Theta \phi)}{\partial \nu} = - (m_\eps- \Theta \phi) b(x) \cdot \nu + \Theta d(x)^{\gamma+\sigma-1} (\sigma+ o(1))\quad \hbox{at $\partial \Omega_\eps$}\,.
			$$
			Using the strong maximum principle of Proposition \ref{strongmp}, we deduce once again that $m_\eps$ and $ \Theta \phi$ can only touch at the interior boundary, whence $\Theta \geq \de_0^{-\gamma}(1-\de_0^{\sigma})^{-1} \inf\limits_{\{d(x)= \de_0\}} m_\eps$. We deduce the lower estimate
			$$
			m_\eps (x) \geq  C_{\de_0}d(x)^{\gamma} \inf_{\{d(x)= \de_0\}} m_\eps\qquad \forall x\in \Omega_\eps\setminus \Omega_{\de_0}\,.
			$$ 
			Using Harnack inequality, and the fact that  $\{d(x)= \de_0\}$ is a compact set, we can estimate $m_\eps$ from below; this means that there exists $\tilde k_0$ such that
			$$
			\inf_{\{d(x)= \de_0\}} m_\eps \geq \tilde k_0 \int_{\Omega_{\frac{\de_0}2}}m_\eps(y)\,dy\,.
			$$
			The last term is estimated from below as in the proof of Theorem \ref{thm:exm}, using \rife{mass-eta}. We conclude that there exists a constant $C_1>0$, independent of $\eps$, such that
			$$
			m_\eps(x) \geq C_1 \, d(x)^{\gamma} \,.
			$$
			This concludes the proof of \eqref{eq:regm}, and consequently we have $m\in L^\infty(\Omega)$. We now observe that $bm_\eps\in L^\infty(\Omega)$; indeed,  
			$b$ and $m_\eps$ are bounded for $x\in\Omega_{\delta_0}$, and for $x\in\Gamma_{\delta_0}$ we have, due to \rife{hp:mraff} and \rife{eq:regm},
			$$
			|b|m_\eps\le Cd(x)^{\gamma -1}(1+o(1))\le C\,.
			$$
			Therefore $m_\eps $   satisfies $-\Delta m_\eps= \dive(f_\eps)$ for some vector field which is uniformly bounded. By elliptic regularity, not only we have $m_\eps$ bounded in  $H^1(\Omega)\cap L^\infty(\Omega)$ uniformly with respect to $\eps$, but also $m_\eps$ bounded in $C^{1,\alpha}(\Omega_\eps)$ for some $\alpha\in (0,1)$ (see e.g.  \cite{lieb88}). Passing to the limit we get the same regularity for $u$.
		\end{proof}

		\begin{rem}
			We observe that hypothesis \eqref{hp:mraff} is satisfied in the case $b=p|\nabla u|^{p-2}\nabla u$, where $u$ solves \eqref{eq:u} with $f\in C^{\alpha}(\Omega)$. Actually, the estimate on $b(\cdot)=-\alpha(\cdot)$ is proved by \eqref{stime-feed} with  $\gamma=p'\ge2$. Moreover, called $\{e_i\}_i$ the canonical base on $\R^n$, we get by \eqref{eq:Jacc}, and for $p\neq 2$,
			\begin{equation*}
				\begin{split}
					\mathrm{div}(b(x))=\sum\limits_{i=1}^n\langle (\mathrm{Jac}\,b)\,e_i,e_i\rangle=p'd(x)^{-2}\Big[1+O\big(\omega_d\big)\Big] =\frac{p'}{d(x)^2}\Big(1+O\big(d^\theta\big)\Big),
				\end{split}
			\end{equation*}
			where we can choose $\theta=1$ for $1<p<\frac32$, $\theta<1$ for $p=\frac32$, $\theta=p'-2$ for $\frac32<p<2$.
			
			For the case $p=2$ we can directly use the equation satisfied by $u$ and \eqref{eq:stimeDu}. Actually, for $p=2$ we have
			$$
			\mathrm{div}(b(x))=2\,\Delta u=2\,|\nabla u|^2+2\,\bl-2f(x)=\frac 2{d(x)^2}\big(1+O(d)\big)\,.
			$$
		\end{rem}
		\section{The Mean Field Games system}
		The well-posedness and the regularity of the Fokker-Planck equation, together with the literature on the Hamilton-Jacobi equation, allow us to prove existence and uniqueness of the Mean Field Games system. We start with the following existence result.
		
		\begin{thm}
			Assume that $1<p\le 2$ and $F\in C(\Omega \times \cP(\Omega))$ satisfies \rife{Fxm}, where $\Omega$ is a bounded domain with $\mathcal{C}^2$ boundary. Then the problem \eqref{mfg} admits a solution $(\bl,u,m)$, with $u\in W^{2,r}_{loc}(\Omega)$ for all $r<+\infty$ and $m\in L^\infty(\Omega)\cap H^1(\Omega)$. In addition, $m\in C^{1,\alpha}(\overline \Omega)$ and $(u,m)$ satisfy the estimates \rife{eq:stimeu}-\rife{eq:stimeDu} and \rife{eq:regm} with $\gamma=p'$.
		\end{thm}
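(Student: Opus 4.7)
The plan is to set up a fixed-point argument for a map $T:\cP(\Omega)\to\cP(\Omega)$, where $\cP(\Omega)$ is endowed with the Kantorovich-Rubinstein (Wasserstein) distance, and apply Schauder's theorem on this convex compact set. Given $m\in\cP(\Omega)$, I would first solve the HJB equation: since $F(\cdot,m)\in W^{1,\infty}(\Omega)$ uniformly in $m$ by \rife{Fxm}, the Lasry-Lions result \cite[Thm VII.3]{LL-SC} provides a unique pair $(\bl_m,u_m)$ solving \rife{LL-ergodic}, normalized by $u_m(x_0)=0$ for a fixed $x_0\in\Omega$. All the estimates of Section 3, namely \rife{eq:taylorDu}, \rife{eq:stimeDu} and Proposition \ref{prop:21}, apply to $u_m$ with constants depending only on $\Omega$ and $C_F$, hence uniform in $m$.

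Next, I would set $b_m(x)=p|\nabla u_m|^{p-2}\nabla u_m$. By \rife{stime-feed} this drift satisfies the strong invariance condition \rife{inv_stronger} with $\beta_0=p'>1$, and by the Jacobian computation in the remark following Proposition \ref{prop:Lip} (specifically \rife{eq:Jacc}, \rife{eq:Jacc2}) it fulfills either \rife{eq:condJac} or \rife{weak_Jac}. Moreover, the closing remark of Section 4 shows that $b_m$ satisfies \rife{hp:mraff} with $\gamma=p'$. I can therefore invoke Theorem \ref{thm:exm} and the subsequent boundary-regularity theorem to obtain a unique invariant density $\tilde m\in C^{1,\alpha}(\overline\Omega)\cap L^\infty(\Omega)\cap H^1(\Omega)$ with $C_1d(x)^{p'}\le\tilde m(x)\le C_2d(x)^{p'}$; the constants are uniform in $m$ because they depend only on $\Omega$ and on the uniform asymptotics of $b_m$. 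I define $T(m)=\tilde m$.

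The continuity of $T$ in the Wasserstein topology is the heart of the argument. If $m_n\to m$ in $\cP(\Omega)$, then $F(\cdot,m_n)\to F(\cdot,m)$ uniformly, and the uniform interior $W^{2,r}_{loc}$ bound on $u_n:=u_{m_n}$ together with the uniform boundary asymptotics \rife{eq:taylorDu}-\rife{eq:stimeDu} yields relative compactness of $\{u_n\}$ in $C^1_{loc}(\Omega)$. Any cluster point solves \rife{LL-ergodic} for the limit $m$; by uniqueness of the Lasry-Lions solution, $u_n\to u_m$ in $C^1_{loc}(\Omega)$ and $\bl_n\to\bl_m$, so $b_n\to b_m$ locally. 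The uniform upper bound $\tilde m_n(x)\le C_2 d(x)^{p'}$ gives tightness up to the boundary, and combined with uniqueness of the invariant measure (Theorem \ref{thm:exm}) this forces $T(m_n)\to T(m)$ in $\cP(\Omega)$ (indeed strongly in $L^1$). Schauder's theorem then yields a fixed point, and the claimed regularity of $(\bl,u,m)$ follows from the estimates applied to the fixed pair.

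The main obstacle I anticipate is the passage to the limit in the HJB equation while keeping track of the singular blow-up at the boundary: one must ensure that the uniform asymptotic profile of $u_n$ near $\partial\Omega$ persists in the limit, so that the drift $b_n$ converges in a topology strong enough to preserve the invariance condition \rife{inv_stronger} and the Jacobian bound \rife{eq:condJac}/\rife{weak_Jac} uniformly. The key insight is that these conditions depend only on the universal boundary asymptotics established in Section 3, which are independent of $m$; therefore once convergence of $u_n$ is secured on compact subsets of $\Omega$, the boundary behavior is automatically inherited by the limit. Finally, uniqueness under the monotonicity assumption \rife{LL-mon} would follow by the classical Lasry-Lions duality argument, using $u_1-u_2$ as a test function against $m_1-m_2$ and exploiting the boundary estimates on $m_i d(x)^{-p'}$ to justify the integration by parts.
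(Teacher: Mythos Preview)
Your overall strategy---Schauder fixed point, solve HJB first, then Fokker--Planck, and close the loop---is exactly what the paper does. The difference in the ambient space (you work in $\cP(\Omega)$ with the Wasserstein metric, the paper works in a bounded $H^1$ ball with the $L^2$ topology) is minor and either choice can be made to work.

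The genuine gap is in your continuity step. You write that tightness of $\tilde m_n$ ``combined with uniqueness of the invariant measure (Theorem \ref{thm:exm}) forces $T(m_n)\to T(m)$''. But to invoke uniqueness you must first verify that the cluster point $\tilde m$ solves the Fokker--Planck equation for the \emph{limiting} drift $b_m$ in the precise sense of Definition \ref{def:fp}. This definition is not the distributional one: it requires the identity $\int(-\Delta\phi+b_m\cdot\nabla\phi)\,d\tilde m=0$ for every bounded $\phi$ with $-\Delta\phi+b_m\cdot\nabla\phi\in C_b(\Omega)$. The admissible test functions change with the drift, so knowing that $\tilde m_n$ satisfies the formulation for $b_n$ does not immediately give the formulation for $b_m$ in the limit. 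The paper handles exactly this point by duality with the linearized HJB problem: given such a $\phi$ with $\eta:=-\Delta\phi+b_m\cdot\nabla\phi$, it constructs $\phi_k$ solving $-\Delta\phi_k+b_k\cdot\nabla\phi_k=\eta+\lambda_k$ via Proposition \ref{prop:delta0}, tests $\tilde m_k$ against $\phi_k$, and then uses the uniqueness of the ergodic constant (again Proposition \ref{prop:delta0}) to show $\lambda_k\to0$. This is the step you are missing, and it is precisely where the well-posedness theory of Section 3.2 for the singular linear problem is used.

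A smaller remark: your last paragraph on uniqueness under \rife{LL-mon} is not part of the present statement; the paper treats it as a separate theorem, and the integration-by-parts justification there is done through Lemma \ref{ucontrom} rather than by a direct bound on $m_i\,d(x)^{-p'}$.
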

		\begin{proof}
			We use   Schauder's fixed point Theorem. Let $X$ be the following space
			$$
			X=\left\{\mu\in H^1(\Omega)\,\,\big|\,\,\norm{\mu}_{H^1(\Omega)}\le M\,,\,\mu\ge 0\,,\,\into\mu(x)\,dx=1\right\}\,,
			$$
			for $M\ge0$ which will be chosen later. It is clear that $X$ is a compact convex subset in $L^2(\Omega)$. For $\mu\in X$ and $x_0\in\Omega$, we take $(\bl_\mu,u_\mu)$ as the unique solution of
			\begin{equation}\label{eq:umu}
				\begin{cases}
					-\Delta u_\mu+|\nabla u_\mu|^p+\bl_\mu=F(x,\mu)\,,\\
					\lim\limits_{x\to\partial\Omega}u_\mu(x)=+\infty\,,\qquad u_\mu(x_0)=0\,.
				\end{cases}
			\end{equation}
			We know that $u_\mu\in W^{2,r}_{loc}(\Omega)$ for all $r>1$ and   estimates \eqref{eq:stimeu}, \eqref{eq:stimeDu}, \eqref{eq:stimeD2u},  \eqref{eq:taylorD2u} hold uniformly in $\mu$, due to assumption \rife{Fxm}.
			
			Then, we define $\Phi(\mu)=m_\mu$, where $m_\mu$ solves
			\begin{equation}\label{eq:mmu}
				\begin{cases}
					\displaystyle -\Delta m_\mu-p\,\mathrm{div}(m_\mu|\nabla u_\mu|^{p-2}\nabla u_\mu)=0\,,\\
					\displaystyle m_\mu\ge 0\,,\qquad\into m_\mu(x)\,dx=1
				\end{cases}
			\end{equation}
			in the sense of Definition \ref{def:fp}. From the previous results we know that $m_\mu\in L^\infty(\Omega)\cap H^1(\Omega)$, and estimate \eqref{eq:regm} holds.
			
			The uniform $H^1$ bound for $m_\mu$ implies that, for $M$ sufficiently large, $\Phi(X)\subseteq X$. We only have to prove the continuity of the map $\Phi$.
			
			To do that, let $\mu_k\to\mu$ in $L^2$, with $\mu_k,\mu\in X$. For simplicity, we use the notation $u_k$, $m_k$, $\bl_k$ instead of $u_{\mu_k}$, $m_{\mu_k}$, $\bl_{\mu_k}$. Moreover, we will shortly write $b_k$ to denote the vector fields $p|\nabla u_k|^{p-2}\nabla u_k$.
			
			Since $\bl_k$ is bounded, and $u_k$ is bounded  in $W^{2,2} (\cK)$, for every compact subset $\cK\subset \Omega$,   up to a (non-relabeled) subsequence we have $\bl_k \to \bl $ and $u_k\to u$   strongly in $H^1 (\cK) $ and  almost everywhere. Passing to the limit in the weak formulation of $(\bl_k,u_k)$ we have that $(\bl,u)$ solves \eqref{eq:umu}.
			
			Since $m_k$ is bounded in $H^1(\Omega)\cap L^\infty(\Omega)$, we have up to subsequences $m_k\to m$ a.e., strongly in $L^p$ $\forall p\ge1$ and weakly in $H^1(\Omega)$, for a certain $m\in L^\infty\cap H^1(\Omega)$. In order to obtain $\Phi(\mu)=m$ and conclude the proof, we have to prove that $m$ satisfies \eqref{eq:mmu} in the sense of Definition \ref{def:fp}.
			
			To do that, we take $\phi\in L^\infty$ satisfying $-\Delta\phi+p|\nabla u|^{p-2}\nabla u\cdot\nabla\phi\in W^{1,\infty}$. For simplicity, we call $b:=p|\nabla u|^{p-2}\nabla u$ and $\eta:=-\Delta\phi+b(x)\cdot\nabla\phi$.
			
			Let $\phi_{k}\in L^\infty$ be the solution of
			$$
			-\Delta\phi_{k}+b_k(x)\cdot\nabla\phi_{k}=\eta+\lambda_k\in W^{1,\infty}(\Omega)\,,
			$$
			for a certain $\lambda_k\in\R$. Such a solution exists thanks to Proposition \ref{prop:delta0}. The weak formulation of $m_k$ implies
			\begin{equation*}
				\begin{split}
					\into(\eta+\lambda_k)m_k(x)\,dx=\into\Big(-\Delta\phi_k(x)+b_k(x)\cdot\nabla\phi_k(x))\Big)\,dm_k(x)=0
				\end{split}
			\end{equation*}
			From \eqref{eq:boundlambda} we have $|\lambda_k|\le\norminf{\eta}\le C$, then $\exists\lambda$ such that $\eta+\lambda_k\to -\Delta\phi+b(x)\cdot\nabla\phi+\lambda$ in $L^2(\Omega)$. Since $m_k\to m$ in $L^2(\Omega)$, we can pass to the limit and obtain
			\begin{equation*}
				\begin{split}
					\into \Big(-\Delta\phi+b(x)\cdot\nabla\phi+\lambda\Big) dm(x)=0\,.
				\end{split}
			\end{equation*}
			We only have to prove that $\lambda=0$ to show that \eqref{eq:fpformula} holds and conclude the proof. The weak formulation of the equation of $\phi_k$ implies that
			\begin{equation*}
				\begin{split}
					\into\nabla\phi_k\cdot(\nabla\xi+b_k\,\xi)\,dx=\into(\eta+\lambda_k)\,\xi\,dx\,,
				\end{split}
			\end{equation*}
			for all $\xi\in C_c^\infty(\Omega)$. 
			We know that $\phi_k$ is bounded in $W^{1,r}(\Omega)$ $\forall\,r\ge1$ (including $r=+\infty$ in the case $p\neq2$) and the bound depends on $\norm{\eta+\lambda_k}_{W^{1,\infty}}$, which does not depend on $k$. Then there exists $\tilde\phi\in H^1(\Omega)$ such that $\nabla\phi_k\weak\nabla\tilde\phi$ in $L^2(\Omega)$. Moreover, $b_k$ is locally uniformly bounded and $b_k\to b$ almost everywhere, which implies $b_k\to b$ in $L^2_{loc}(\Omega)$. Passing to the limit we find
			$$
			\into\nabla\tilde\phi\cdot(\nabla\xi+b\,\xi)\,dx=\into(\eta+\lambda)\,\xi\,dx\,.
			$$
			This means that $\phi$ and $\tilde\phi$ solve  the two equations
			$$
			-\Delta\phi+b(x)\cdot\nabla\phi=\eta\,,\qquad-\Delta\tilde\phi+b(x)\cdot\nabla\tilde\phi=\eta+\lambda\,.
			$$
			Thanks to Proposition \ref{prop:delta0}, this implies $\lambda=0$ and concludes the proof.
		\end{proof}
		
		For the uniqueness part we need the following Lemma, which allows us to extend the set of test functions for the Fokker-Planck equation.
		\begin{lem}\label{ucontrom}
			Assume $m$ is a solution of \eqref{eq:fp}, with $b\in W^{1,\infty}_{loc}(\Omega)$ satisfying \eqref{hp:mraff} with $\gamma=p'$. Then, if $u$ solves \eqref{eq:u} with $f\in C^\alpha(\Omega)$, it holds
			\begin{equation}\label{eq:utestfunction}
				\into (-\Delta u+b(x)\cdot\nabla u)\,dm(x)=0\,.
			\end{equation}
		\end{lem}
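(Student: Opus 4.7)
The main obstacle is that $u$ blows up at $\partial\Omega$ and therefore is not directly an admissible test function in Definition \ref{def:fp}, which requires $\phi\in L^\infty(\Omega)$. The strategy is to exploit a precise matching between the boundary singularities of $u,\nabla u,b$ and the vanishing of $m$, and to replace $u$ by a cut-off approximation $\zeta_\eps u$ that \emph{is} admissible.

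First I would verify that the integrand in \eqref{eq:utestfunction} is actually absolutely integrable. Since $f\in C^\alpha(\Omega)$, standard Schauder theory upgrades $u$ to $C^{2,\alpha}_{loc}(\Omega)$, so $-\Delta u+b\cdot\nabla u$ is a continuous function in $\Omega$. Near $\partial\Omega$, the HJB equation gives $-\Delta u = f-\bar\lambda-|\nabla u|^p$, and the asymptotics \eqref{eq:stimeDu} imply $|\nabla u|^p\sim (p-1)^{-p'}d(x)^{-p'}$; combined with the leading part $b(x)\sim p'd(x)^{-1}\nu(x)$ from \eqref{hp:mraff} and $\nabla u(x)\sim (p-1)^{1-p'}d(x)^{1-p'}\nu(x)$, one obtains
\[
-\Delta u(x)+b(x)\cdot\nabla u(x)=(p-1)^{1-p'}d(x)^{-p'}+o(d(x)^{-p'})\,,
\]
so that this quantity is $O(d(x)^{-p'})$ near $\partial\Omega$. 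Since by \eqref{eq:regm} (with $\gamma=p'$) we have $m(x)\le C d(x)^{p'}$, the product $(-\Delta u+b\cdot\nabla u)\,m$ belongs to $L^\infty(\Omega)\subset L^1(\Omega)$.

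Next, fix a smooth cut-off $\eta_\eps:\R_+\to[0,1]$ with $\eta_\eps\equiv 0$ on $[0,\eps]$, $\eta_\eps\equiv 1$ on $[2\eps,\infty)$, $|\eta_\eps'|\le C/\eps$, $|\eta_\eps''|\le C/\eps^2$, and set $\zeta_\eps(x):=\eta_\eps(d(x))$. The function $\phi_\eps:=\zeta_\eps u$ is bounded, has compact support in $\Omega$, and belongs to $C^{2,\alpha}$ thereon; hence $-\Delta\phi_\eps+b\cdot\nabla\phi_\eps\in C_b(\Omega)$, so $\phi_\eps$ is admissible in Definition \ref{def:fp}. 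Writing out the product rule,
\[
-\Delta\phi_\eps+b\cdot\nabla\phi_\eps=\zeta_\eps\bigl(-\Delta u+b\cdot\nabla u\bigr)-2\nabla\zeta_\eps\cdot\nabla u+u\bigl(b\cdot\nabla\zeta_\eps-\Delta\zeta_\eps\bigr)\,,
\]
so the weak formulation for $m$ applied to $\phi_\eps$ yields
\[
\into\zeta_\eps\bigl(-\Delta u+b\cdot\nabla u\bigr)dm
=\into\bigl(2\nabla\zeta_\eps\cdot\nabla u+u(\Delta\zeta_\eps-b\cdot\nabla\zeta_\eps)\bigr)dm\,.
\]

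It remains to pass to the limit $\eps\to 0$. For the left-hand side, $\zeta_\eps\to 1$ pointwise and the integrand is dominated by $|(-\Delta u+b\cdot\nabla u)m|\in L^\infty$, so by dominated convergence the left-hand side tends to $\into(-\Delta u+b\cdot\nabla u)dm$. For the right-hand side, note that $\nabla\zeta_\eps$ and $\Delta\zeta_\eps$ are supported in $\Gamma_{2\eps}\setminus\Gamma_\eps$, a shell of width $\eps$ where $d(x)\simeq\eps$. On this shell the bounds $|\nabla u|\lesssim d^{1-p'}$, $|u|\lesssim\phi_p(d)$ (that is, $d^{2-p'}$ for $p<2$ or $|\log d|$ for $p=2$), $m\lesssim d^{p'}$, $|\nabla\zeta_\eps|\lesssim 1/\eps$, $|\Delta\zeta_\eps|\lesssim 1/\eps^2$, and $|b\cdot\nabla\zeta_\eps|\lesssim 1/\eps^2$ give
\[
\int|\nabla\zeta_\eps\cdot\nabla u|\,m\,dx\lesssim \eps^{-1}\cdot\eps^{1-p'}\cdot\eps^{p'}\cdot\eps=\eps\,,
\]
and similarly the terms with $u(\Delta\zeta_\eps-b\cdot\nabla\zeta_\eps)$ are $O(\eps)$ (with a harmless $|\log\eps|$ factor if $p=2$). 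Hence the right-hand side vanishes in the limit, yielding \eqref{eq:utestfunction}. The main technical point to be careful about is the balancing of singularities on the boundary shell; but this is dictated precisely by the matched exponent $\gamma=p'$ in \eqref{hp:mraff}, together with the sharp boundary asymptotics of $u$ and $m$ coming from \eqref{eq:stimeu}--\eqref{eq:stimeDu} and \eqref{eq:regm}.
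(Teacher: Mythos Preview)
Your argument is correct, but it follows a different route from the paper's own proof. The paper does \emph{not} truncate $u$; instead it goes back to the approximating densities $m_\eps$ solving the Neumann problem \eqref{eq:m_epsapprox} on $\Omega_\eps$, tests the equation of $m_\eps$ against $u$ (which is smooth on $\overline{\Omega_\eps}$), and obtains a single boundary integral $-\int_{\partial\Omega_\eps}m_\eps\,\nabla u\cdot\nu$. This term is controlled using the uniform bound \eqref{eq:miserveperunicita} on $m_\eps$ (namely $m_\eps\le C d^{p'}$) together with $|\nabla u|\lesssim d^{1-p'}$, giving $O(\eps)$; the bulk term passes to the limit by dominated convergence since $(-\Delta u+b\cdot\nabla u)\,m_\eps$ is uniformly bounded and $m_\eps\to m$ a.e.

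Your approach, by contrast, keeps $m$ fixed and approximates the test function by $\phi_\eps=\zeta_\eps u$, which is directly admissible in Definition~\ref{def:fp}; the price is three ``shell'' error terms rather than one boundary integral, but all of them are killed by the same balance $m\lesssim d^{p'}$ versus $|\nabla u|\lesssim d^{1-p'}$, $|b|\lesssim d^{-1}$. The paper's version is shorter because it recycles the $m_\eps$ already built (and the estimate \eqref{eq:miserveperunicita} proved along the way), whereas your version is more self-contained: it uses only the final regularity \eqref{eq:regm} of $m$ and never reopens the approximation scheme. Either way, the essential mechanism---the exact exponent match $\gamma=p'$ between the vanishing of $m$ and the blow-up of $-\Delta u+b\cdot\nabla u$---is the same.
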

		\begin{proof}
			Let $m_\eps$ solve \eqref{eq:m_epsapprox}. Since $u$ is smooth in $\Omega_\eps$, we can use $u$ as test function for $m_\eps$, obtaining
			$$
			\int_{\Omega_\eps} (-\Delta u+b(x)\cdot\nabla u)\,m_\eps(x)\,dx=-\int_{\partial\Omega_\eps}m_\eps\,\nabla u\cdot\nu\,dx\,.
			$$
			For the right-hand side, using \eqref{eq:stimeDu} and \rife{eq:miserveperunicita} (with $\gamma=p'$), we have  
			$$
			\left|\int_{\partial\Omega_\eps}m_\eps\,\nabla u\cdot\nu\,dx\right|\le C\eps^{p' }\eps^{-\frac 1{p-1}}=C\eps \to0\,.
			$$
			For the left-hand side, we use \eqref{eq:stimeDu}, \eqref{eq:stimeD2u} and \eqref{eq:miserveperunicita} (with $\gamma=p'$), so we have 
			$$
			|(-\Delta u+b(x)\cdot\nabla u)\,m_\eps(x)|\le   C \,.
			$$
			Since $m_\eps\to m$ a.e., by  dominated convergence theorem we  get \eqref{eq:utestfunction}.
		\end{proof}
		Now we are ready to prove the uniqueness part.
		\begin{thm}
			Assume that $1<p\le 2$ and $F$ satisfies \rife{Fxm} and \rife{LL-mon}.  Then the problem \eqref{mfg} admits at most one solution $(\bl,u,m)$, up to an additive constant for the function $u$.
		\end{thm}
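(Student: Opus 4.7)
The strategy I would follow is the classical Lasry--Lions monotonicity argument, adapted to this singular setting through Lemma \ref{ucontrom}. Take two solutions $(\bl_i, u_i, m_i)$ of \eqref{mfg}, $i=1,2$. I would first subtract the two HJB equations and integrate against $d(m_1 - m_2)$; the ergodic constants cancel since $m_1, m_2 \in \cP(\Omega)$, and the monotonicity hypothesis \rife{LL-mon} forces
\begin{equation*}
\into \big[-\Delta(u_1 - u_2) + |\nabla u_1|^p - |\nabla u_2|^p\big]\, d(m_1 - m_2) \geq 0\,.
\end{equation*}

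The second step is to eliminate the Laplacian term via the weak Fokker--Planck equation. Since $F(\cdot, m_i) - \bl_i \in W^{1,\infty}(\Omega) \subset C^\alpha(\Omega)$ by \rife{Fxm}, and the drifts $p|\nabla u_i|^{p-2}\nabla u_i$ satisfy \eqref{hp:mraff} with $\gamma=p'$ (by \eqref{stime-feed} and the remark after Theorem \ref{thm:exm}), Lemma \ref{ucontrom} applies in each of the four combinations $i,j\in\{1,2\}$, giving
\begin{equation*}
\into \big[-\Delta u_j + p|\nabla u_i|^{p-2}\nabla u_i \cdot \nabla u_j\big]\, dm_i = 0\,.
\end{equation*}
Taking the difference $j=1$ minus $j=2$ for each $i$, I would rewrite $\int -\Delta(u_1-u_2)\, d(m_1-m_2)$ purely in terms of gradients. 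Plugging this back into the starting inequality and rearranging by $m_1$-- and $m_2$--integrals, one arrives at
\begin{equation*}
\into E(\nabla u_1, \nabla u_2)\, dm_1 + \into E(\nabla u_2, \nabla u_1)\, dm_2 \geq 0\,,
\end{equation*}
where $E(\xi,\eta):=|\xi|^p - |\eta|^p + p|\xi|^{p-2}\xi\cdot(\eta-\xi)$.

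To conclude, I would invoke strict convexity of $\xi\mapsto|\xi|^p$ (valid for $p>1$): this gives $E(\xi,\eta)\leq 0$ with equality if and only if $\xi=\eta$. Both integrands above are then pointwise nonpositive, yet their sum is nonnegative, so each must vanish $m_i$-a.e. The boundary estimate \eqref{eq:regm} with $\gamma=p'$ guarantees $m_i>0$ a.e. in $\Omega$, hence $\nabla u_1=\nabla u_2$ a.e., and by connectedness of $\Omega$, $u_1-u_2$ is constant. The two Fokker--Planck equations then share identical drifts, so $m_1=m_2$ by the uniqueness part of Theorem \ref{thm:exm}; the two HJB problems then have the same right-hand side $F(x,m_1)$, and the uniqueness of the ergodic constant from \cite{LL-SC} gives $\bl_1=\bl_2$.

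The only delicate point, and the main obstacle, is the second step. Ordinarily in MFG uniqueness one freely tests the Fokker--Planck equation with $u$, but here $u_i$ blows up at $\partial\Omega$ and this manipulation is not automatic; moreover one needs to use $u_j$ in the Fokker--Planck equation associated to the drift $b_i$ built from $u_i$ with $i\ne j$. Lemma \ref{ucontrom} is designed exactly for this, balancing the boundary decay $m_\eps=O(d^{p'})$ against the boundary blow-up $|\nabla u|=O(d^{1-p'})$ so that the boundary contributions in the Neumann approximation vanish in the limit; crucially, its statement is insensitive to whether $u$ and $b$ come from the same or different HJB problems, which is what allows the cross-testing above. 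Once the lemma is in hand for all four $(i,j)$ pairs, the remainder of the argument is purely algebraic.
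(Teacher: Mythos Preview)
Your proposal is correct and follows essentially the same Lasry--Lions monotonicity argument as the paper: both hinge on the cross-testing identities supplied by Lemma \ref{ucontrom}, followed by the strict convexity of $\xi\mapsto|\xi|^p$ to force $\nabla u_1=\nabla u_2$, and then the uniqueness results for the Fokker--Planck and ergodic HJB problems. The only cosmetic differences are that you organize the key identity as an inequality (moving the monotonicity term to the other side) rather than an equality, and you invoke the positivity $m_i>0$ from \eqref{eq:regm} to conclude $\nabla u_1=\nabla u_2$ everywhere before turning to the Fokker--Planck uniqueness, whereas the paper first restricts to $\{m\neq 0\}\cup\{\mu\neq 0\}$; both routes are equivalent here.
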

		\begin{proof}
			Let $(\bl,u,m)$ and $(\tilde\lambda,v,\mu)$ be two solutions. Then it holds
			\be\label{u-v}
			-\Delta(u-v)+|\nabla u|^p-|\nabla v|^p+\bl-\tilde\lambda=F(x,m)-F(x,\mu)\,.
			\ee
			By Lemma \ref{ucontrom}, both $u$ and $v$ satisfy \rife{eq:utestfunction} with both $m$  and $\mu$. This implies
			\begin{equation*}
				\begin{split}
					& \into -\Delta(u-v)\, dm(x)+ p \into |\nabla u|^{p-2}\nabla u\nabla(u-v)\, dm(x) =0 
					\\ & \into -\Delta(u-v)\, d\mu(x)+ p \into |\nabla v|^{p-2}\nabla v\nabla(u-v)\, d\mu(x) =0\,.
				\end{split}
			\end{equation*}
			Using \rife{u-v} in  the above equalities, and subtracting one from the other,  we obtain
			\begin{equation*}
				\begin{split}
					&\into \big(F(x,m)-F(x,\mu)\big)(m-\mu)\,dx \\
					&+ \into\Big(|\nabla v|^p-|\nabla u|^p-p|\nabla u|^{p-2}\nabla u(\nabla v-\nabla u)\Big)\,dm(x)\\
					&+ \into \Big(|\nabla u|^p-|\nabla v|^p-p|\nabla v|^{p-2}\nabla v(\nabla u-\nabla v)\Big)\,d\mu(x)= 0\,.
				\end{split}
			\end{equation*}
			The first term is non-negative, due to \rife{LL-mon}, whereas  the other two terms are  non-negative due to the strict convexity of $x\to|x|^p$ for $p>1$.
			Therefore, we deduce that  $\nabla u=\nabla v$ almost everywhere on the set $\{m\neq0\}\cup\{\mu\neq 0\}$. Hence, $m$ and $\mu$ solve the same Fokker-Planck equation, which implies $m=\mu$ by Theorem \ref{thm:exm}. This means that $(\bl,u)$ and $(\tilde\lambda,v)$ solve the same Hamilton-Jacobi equation.  Hence, by Proposition  \ref{prop:delta0}, we get  $\bl=\tilde\lambda$, $u=v+C$ for a certain $C>0$, and the proof is completed.
		\end{proof}
		
		{\bf Acknowledgements.} The research was supported by Project ``Mean-field games: models, theory, and computational aspects" ORA-2021-CRG10-4674.5
		(Kaust University).  A.P. is supported by Indam (Istituto Nazionale di Alta Matematica) and GNAMPA  research projects.

	\end{document}